\renewcommand{\r}{\right}
\renewcommand{\l}{\left}
\def\vd{\mathrm{d}}
\def\dsum{\displaystyle\sum}
\def\dint{\displaystyle\int}
\def\dlim{\displaystyle\lim}
\newtheorem{theorem}{Theorem}[section]
\newtheorem{lemma}[theorem]{Lemma}
\newtheorem{corollary}[theorem]{Corollary}
\newtheorem{proposition}[theorem]{Proposition}
\theoremstyle{definition}
\newtheorem{remark}[theorem]{Remark}
\numberwithin{equation}{section}
\begin{document}

\arraycolsep=1pt

\title{\Large\bf  Reflecting Diffusion Semigroup on  Manifolds carrying Geometric Flow
\footnotetext{\hspace{-0.35cm} 2010 {\it Mathematics Subject
Classification}. {Primary 58J65; Secondary  60J60, 53C44.}
\endgraf{\it Key words and phrases}. {Geometric flow,  Ricci flow, curvature, second fundamental form, transportation-cost inequality, Harnack inequality, coupling.}
\endgraf{\it *} {Corresponding author}.
\endgraf{\it *}{Department of Applied Mathematics, Zhejiang University of Technology, Hangzhou 310023, China}.
\endgraf{$\sharp$}{Department of Management Sciences, City University of Hong Kong, Kowloon, Hong Kong 999077, China}.
\endgraf{\it E-mail}:\texttt{chenglj@zjut.edu.cn}(L.J.Cheng), \texttt{zhangkunbnu05@163.com}(K.Zhang)}
}
\author{{\bf Li-Juan Cheng, $^{\ast}$  \ \ \ \  \  Kun Zhang $^{\sharp}$}}
\date{ }
\maketitle

\vspace{-0.6cm}

\begin{center}
\begin{minipage}{13.5cm}\small
{{\bf Abstract.} Let $L_t:=\Delta_t+Z_t$ for a $C^{\infty}$-vector field $Z$ on a  differentiable manifold $M$ with boundary $\partial M$,
where $\Delta_t$ is the Laplacian operator, induced by a time dependent metric $g_t$ differentiable in $t\in [0,T_c)$.
We first establish the derivative formula for the associated reflecting diffusion semigroup  generated by $L_t$; then construct
 the couplings for the reflecting $L_t$-diffusion processes  by parallel displacement and reflection, which are applied to   gradient estimates and Harnack inequalities  of the associated heat semigroup; and finally, by using the derivative formula, we present a number of equivalent inequalities for a new curvature lower bound  and the  convexity of the boundary, including the  gradient estimates, Harnack inequalities,  transportation-cost inequalities and other functional inequalities   for  diffusion semigroups.}
\end{minipage}
\end{center}

\section{Introduction and main results}
\quad
It is well known that,  functional inequalities, for instance, gradient inequalities and dimension-free Harnack inequalities, are useful tools on stochastic analysis  to investigate the behavior of the underlying processes on Riemannian manifolds, see, for example, \cite{CW94, CW97a, EL, Hsu, Wbook2}.
Among all those work,  one usually makes the assumption that the metric is fixed. However,
when it comes to the case that metric is time-varying,  a question arises naturally:
how about functional inequalities on these manifolds?
In recent year, M. Arnaudon, K. Coulibaly and A.  Thalmaier \cite{ACT}
  constructed  $g_t$-Brownian motions (i.e., the diffusion process generated by $L_t=\frac{1}{2}\Delta_t$) on manifolds without boundary carrying
a geometric flow, and established the Bismut
formula under the  Ricci flow, which in particular implies the gradient estimates
of the associated heat semigroup.  In \cite{cheng}, the first author studied functional inequalities,
including those
on manifolds carrying geometric flow for the  diffusion semigroup.
 Motivated by the aforementioned results, this article aim to extends these results in \cite{ACT, cheng} to the case with boundary.

The setting for our work is a differentiable  manifold with boundary
equipped with a geometric flow. More precisely,  let $M$  be a $d$-dimensional  differentiable manifold with boundary $\partial M$, which carries   a
one-parameter $C^{\infty}$-family of complete Riemannian metrics $\{g_t\}_{t\in [0,T_c)}$, where $T_c$ is the time when the curvature may blow up.
Consider the elliptic operator $L_t:=\Delta_t+Z_t$, where $\Delta_t$ is the  Laplacian operator associated with the metric $g_t$ and $(Z_t)_{t\in [0,T_c)}$ is a $C^{\infty}$-family of vector fields.  Let  $(X_t)$ be  a reflecting
diffusion process generated by $L_t$ (called the reflecting $L_t$-diffusion process), which is assumed to be non-explosive. This  assumption immediately implies that this process corresponds in a natural way to a strongly continuous semigroup $P_{s,t}$, i.e.,
$$P_{s,t}f(x)=\mathbb{E}(f(X_t)|X_s=x),\quad 0\leq s\leq t<T_c.$$
In this article, we   fix on extending  the former discussions to our content for the semigroup $P_{s,t}$. Compared with
F.-Y. Wang's work on  functional inequalities over Riemannian manifolds  with boundary (see for example \cite{W09a,WSe, W10,WPC,WAP, Wbook2} and the reference therein),  we need to make some necessary modifications to our inhomogeneous context, since e.g.  geometric quantities are time-dependent and the underlying process is time-inhomogeneous.

Before moving on, let us briefly recall some known results in the time-inhomogeneous Riemannian setting without boundary. M. Arnoudon,
 K. Coulibaly and A. Thalmaier \cite{ACT1} investigated  the
optimal transportation inequality by constructing horizontal diffusion processes. Then, K. Kuwada and R. Philipowski \cite{Ku} studied  the non-explosion of $g_t$-Brownian
motions   under the super Ricci flow, and K. Kuwada \cite{Ku3} developed the coupling method to estimate the gradient of the semigroup. Very recently, the author \cite{cheng} has
considered the construction of coupling processes and some important functional inequalities on manifolds without boundary carrying a geometric flow.
All these works lay solid foundation for our study.

Let $\nabla^t$  be  the Levi-Civita connection  associated with the
metric $g_t$.
For simplicity, we introduce the notation: for $X, Y\in TM$,
\begin{align*}
    &\mathcal{R}_t^{Z}(X,Y):={\rm Ric}_t(X,Y)-\l<\nabla^t_XZ_t, Y\r>_t-\frac{1}{2}\partial_tg_t(X,Y),
\end{align*}
 where ${\rm Ric}_t$ is the Ricci curvature tensor with respect to the metric $g_t$,  and $\l<\cdot,\cdot\r>_t:=g_t(\cdot,\cdot)$. Define the second fundamental form of the boundary with respect to $g_t$ by
 $${\rm II}_{t}(X,Y)=-\l<\nabla^t_XN_t,Y\r>_t, \ \ X,Y\in T\partial M,$$
 where  $N_t$ is the
inward unit normal vector field of the boundary associated with the metric $g_t$.
If ${\rm II}_t\geq 0$ for all $t\in [0,T_c)$,  then the geometric flow $\{g_t\}_{t\in [0,T_c)}$ is called to be convex. In the fixed metric case, many functional inequalities are always deduced under the Bakry-Emery curvature condition.  In this paper,  we begin our discussion  by using the following  curvature constraints:
\begin{equation}\label{CV1-1}
\mathcal{R}_t^Z\geq K(t,\cdot)\quad\mbox{and}\quad {\rm II}_t\geq \sigma(t,\cdot)
\end{equation}
for some continuous functions $K,\sigma \in C([0,T_c)\times M)$. Here and in what follows, for any two-tensor $\mathbf{T}_t$ and any function $f$, we write $\mathbf{T}_t\geq f$ if ${\bf T}_t(X,X)\geq f\l<X,X\r>_t$, for $X\in TM$ and $t\in [0,T_c)$.
 Compared with  the usual Bakry-Emery curvature condition, the time derivative of the metric will become a new important
term the curvature condition.

Let $\rho_t$ be the Riemannian distance  and $|\cdot|_t$ be the norm associated with the metric $g_t$. When the geometric  flow is convex, we have the first main result of this paper.
\begin{theorem}\label{main-th-1}
For any $K\in C([0,T_c))$,  the following statements are equivalent to each other.

\begin{itemize}
 \item [$(i)$] The following curvature condition holds,
 \begin{equation}\label{CV1}
   \mathcal{R}^Z_t\geq K(t)\ \  \mbox{and}\ \    {\rm II}_t\geq 0~(\partial M\neq \varnothing)\ \  \mbox{for\ all} \ \  t\in [0,T_c).
 \end{equation}
  \item [$(ii)$]  The gradient inequality
\begin{align}\label{add-1}
|\nabla ^sP_{s,t}f|_s\leq e^{-\int_s^t K(r)\vd r}P_{s,t}|\nabla^tf|_t,\ 0\leq s\leq t<T_c
\end{align}
holds for $f\in C^1(M)$ such that $f$ is constant outside a compact set of $M$.\vspace{-0.2cm}
  \item [$(iii)$]  For any $p>1$, $0\leq s<t<T_c$ and $f\in \mathscr{B}_b^+(M)$,
  \begin{align}\label{add-2}
  (P_{s,t}f)^p(x)\leq P_{s,t}f^p(y)\exp{\l[\frac{p}{4(p-1)}{\l(\int_s^te^{2\int_s^rK(u)\vd u}\vd
r\r)}^{-1}\rho_s^2(x,y)\r]}.
\end{align}
\end{itemize}
\end{theorem}
We intend to use the coupling method to prove that \eqref{CV1} implies \eqref{add-1} and \eqref{add-2}. It is well known that coupling method is a useful tool in stochastic analysis. It is remarkable that M.-F. Chen and F.-Y. Wang \cite{CW94,CW97a,CW97b} gave subtle estimates about the first eigenvalue on Riemannian manifolds by constructing suitable coupling processes.  Note that K. Kuwada \cite{Ku3} first constructed  the coupling processes for $L_t$-diffusion processes on manifolds without boundary via
discrete approximation. In our recent work \cite{cheng}, we gave a direct construction for general  coupling processes on manifolds without boundary.
Here, we modify this proof to our setting.


On the other hand,  to prove that each of \eqref{add-1}
and \eqref{add-2} implies the curvature condition \eqref{CV1},  we need to use the derivative formula to characterize $\mathcal{R}_t^Z$ and ${\rm II}_t$ first. In the following section, we  construct a series of Hsu's multiplicative functionals to establish the derivative formula (see Theorem \ref{Bis} below). When the metric is independent of $t$, our construction is due to  \cite[Theorem 3.2.1]{Wbook2} for the constant metric case.
In fact, it is more difficult for us  to deal with the case of differentiable manifolds carrying the non-convex flow,
 since it is hard to control the effect from the boundary by using the coupling method.
 A direct thought is to make a conformal change of the metrics such that the  new flow becomes convex.
When the metric is independent of $t$, this method is successfully applied to the non-convex manifold,
see \cite{W07,W10,WAP}.  First, let us introduce an important set:
\begin{equation}\label{D}
   \mathscr{D}=\{\phi\in C^{1,\infty}([0,T_c)\times M): \inf \phi_t=1,  \ {\rm II}_t\geq -N_t\log \phi_t\}.
\end{equation}
Then, by \cite[Lemma 2.1]{W07}, for $\phi\in \mathscr{D}$, the new flow  $\tilde{g}_t:=\phi_t^{-2}g_t$ is   convex. Moreover, we are required to having the following assumption on $\phi$, ${\rm Ric}_t^{Z}$,  and $ \partial_tg_t$ to continue our discussion.
\begin{description}
  \item[(H1)] Let $d\geq 2$. There exist functions $K_1, K_2\in C([0,T_c))$ such that
\begin{align}
    {\rm Ric}_t^{Z}:={\rm Ric}_t-\nabla^tZ_t\geq K_1(t), \qquad
     \partial_tg_t\leq K_2(t), \label{CV3}
\end{align}
and $\phi\in\mathscr{D}$
such that $\|\nabla^t \phi_t\|_{\infty}<\infty$, $\|\phi_t\|_{\infty}<\infty$ and
\begin{align*}
K_{\phi,1}(t):&=\inf_{M}\l\{\phi_t^2K_1(t)+\frac{1}{2}L_t\phi_t^2-|\nabla^t\phi_t^2|_t\cdot|Z_t|_t-(d-2)|\nabla^t\phi_t|^2_t\r\}>-\infty,\\
K_{\phi,2}(t):&=\sup_{M}\{2\partial _t\log\phi_t\}+K_2(t)<\infty,
\end{align*}
where $\|\nabla^tf\|_{\infty}:=\sup_{x\in M}|\nabla^tf|_t(x)$.
\end{description}
If this assumption holds, then by constructing suitable coupling processes, we have the second main result of this paper.
\begin{theorem}\label{P1}
Suppose that  {\bf (H1)} holds and
$$K_{\phi}(t):=K_{\phi,1}^-(t)+\frac{1}{2}K_{\phi,2}(t)+[2\|\phi_tZ_t+(d-2)\nabla^t\phi_t\|_{\infty}+d\|\nabla^t\phi_t\|_{\infty}]\|\nabla^t\phi_t\|_{\infty}<\infty.$$
Then the following conclusions hold.
\begin{itemize}
         \item [$(i)$] For any $f\in C^1(M)$ such that $f$ is constant outside a compact set,
$$|\nabla^sP_{s,t}f|_s\leq \|\phi_t\|_{\infty}\|\nabla^tf\|_{\infty}e^{\int_s^tK_{\phi}(r)\vd r},\ \ 0\leq s\leq t< T_c.$$
         \item [$(ii)$] For  $0\leq s<t<T_c$, let $\delta_{s,t}=1-\sup_{r\in [s,t]}\|\phi_r\|_{\infty}^{-1}$, $\lambda_{s,t}=\inf_{(r,x)\in [s,t]\times M}\phi^{-1}$ and $$\delta_{p}=\max\l\{\delta_{s,t},\frac{\lambda_{s,t}}{2}(\sqrt{p}-1)\r\}.$$ Then
for $p>(1+\frac{\delta_{s,t}}{\lambda_{s,t}})^2$, $x,y\in M$ and  $f\in C_b(M)$,
   it holds
  $$(P_{s,t}f(y))^p\leq P_{s,t}f^p(x)\exp\l\{\frac{\sqrt{p}(\sqrt{p}-1)\rho_s(x,y)}{8\delta_p[(\sqrt{p}-1)\lambda_{s,t}-\delta_p]\int_s^te^{-2\int_s^r(K_{\phi}(u)+\|\nabla^u\phi_u\|_{\infty}^2)\vd u}\vd r}\r\}.$$

       \end{itemize}
\end{theorem}

As an important  application of the induced conclusions above for general geometric flow,  we  consider the Ricci flow with umblic boundary as follows:
 for $\lambda\geq 0$,
 \begin{equation}\label{Ricci-flow}
    \begin{cases}
\frac{\partial}{\partial t}g(x,\cdot)(t)=2{\rm Ric}(x,t), \  & \  \ (x,t)\in M\times [0,T];\\
\\
{\rm II}(x,t)=\lambda g(x,t),\ & \ \ x\in \partial M.
\end{cases}
 \end{equation}
 Shen \cite{Shen} proved  the short time existence of the solution to the above equation. We also refer the reader to \cite{BCP} for  more  geometric explanation for this Ricci flow. To our knowledge,  there are few references about gradient estimates and Harnack inequalities for the solution to the heat equation under the Ricci flow carrying non-convex umbilic boundary.    In Section 3.3,   we will apply Theorems \ref{main-th-1} and \ref{P1} to establish  these inequalities for this  system;  see Theorems \ref{Ricci-gradient} and  \ref{Ricci-Harnack-inequality} below.

The rest parts of the paper are
organized as follows. In Section 2, we construct the reflecting $L_t$-diffusion processes,  prove
the Kolmogorov equations  and then establish the derivative formula for the associated semigroup. In
Sections 3, we turn to prove Theorems \ref{main-th-1} and \ref{P1} by constructing the coupling processes, which are applied to  the Ricci flow with umbilic boundary. In Section 4,  some important inequalities including transportation-cost inequality, Harnack inequalities and other functional inequalities are proved to be equivalent to the lower bound of $\mathcal{R}^Z_t$ and the convexity of the boundary.

We end this section by making some conventions on the notations.
  Let $\mathscr{B}_b(M)$ be the set of all measurable functions and $C^p_0(M)$ the set of all $C^p$-smooth real functions with compact supports on $M$.
For any function  $f$ and $\varphi$ respectively defined on $[0,T_c)\times M$  and  $[0,T_c)\times M\times M$, we simply write $f_t(x):=f(t,x)$ and $\varphi_t(x,y):=\varphi(t,x,y)$, $t\in [0,T_c), x,y\in M$. In addition, $\|f_t\|_{\infty}:=\sup_{x\in M}f(t,x)$ and $\|f\|_{\infty}=\sup_{(t,x)\in [0,T_c)\times M}f(t,x)$.  For any time-depending vector field $V_t$, we write $\|V_t\|_{\infty}:=\||V_t|_t\|_{\infty}$ for simplicity.

\section{Preliminaries}
\hspace{0.5cm} In Subsection 2.1, we briefly introduce the construction of reflecting $L_t$-diffusion
processes. In Subsection 2.2, the forward and backward Kolmogorov equations are established for Neumann diffusion semigroup. In Subsection 2.3, a derivative formula is established, which is further applied to characterizing $\mathcal{R}^Z_t$ and ${\rm II}_t$.
\subsection{Reflecting $L_t$-diffusion processes}
\hspace{0.5cm} Let $\mathcal{F}(M)$  be the frame bundle over $M$ and
$\mathcal{O}_{t}(M)$
the orthonormal frame bundle over $M$ with respect to the metric  $g_t$.  Set $\mathbf{p}:
\mathcal{F}(M)\rightarrow M$ be the projection
from $\mathcal{F}(M)$ onto $M$.
Let $\{e_{i}\}_{i=1}^{d}$ be the canonical orthonormal basis of
$\mathbb{R}^d$.
 For any $u\in \mathcal{O}_t(M)$,
let $H^{t}_{X}(u)$ be the $\nabla^{t} $-horizontal lift
of $X\in T_{{\bf p}u}M$ and $H_{i}^t(u)=H_{ue_i}^t(u), i=1,2,\cdots, d$.
 For any $u\in \mathcal{F}(M)$,
let $\{V_{\alpha, \beta}(u)\}_{\alpha,\beta=1}^d$ be the canonical basis
of vertical  fields over $\mathcal{F}(M)$.

 Let $B_t:=(B_t^1,B_t^2,\cdots,B_t^d)$ be a $\mathbb{R}^d$-valued Brownian motion on  a complete
filtered probability space $(\Omega,\{\mathscr{F}_t\}_{t\geq 0}, \mathbb{P})$  with the natural filtration
$\{\mathscr{F}_t\}_{t\geq 0}$.
As in the time-homogeneous case, to construct the reflecting $L_t$-diffusion process,
we first construct the corresponding horizontal diffusion process
  by solving the Stratonovich stochastic
diffusion equation (SDE):
$$\begin{cases}
 \vd u_t=\sqrt{2}\dsum_{i=1}^{d}H_{i}^t(u_t)\circ \vd
B_t^{i}+H_{Z_t}^t(u_t)\vd t-\frac{1}{2}\dsum_{i,j}\partial_tg_t(u_te_i,u_te_j)V_{i, j}(u_t)\vd t+H^t_{N_t}(u_t)\vd l_t,\medskip\\
u_0\in \mathcal{O}_0(M),\ {\bf p}u_0=x\in M,
\end{cases}$$
where 
$l_t$ is an increasing
process supported on $\{t\in[0,\zeta):X_t:={\bf p}u_t\in \partial M\}$, where $\zeta:=\dlim_{n\rightarrow
\infty}\zeta_n$ and
$$\zeta_n:=\inf\{t\in [0,T_c):\rho_t({\bf p}u_0, {\bf p}u_t)\geq n\}, \ n\geq 1,\  \inf\varnothing=T_c.$$  Similarly as explained in \cite{ACT}, the last term is essential to ensure $u_t\in \mathcal{O}_t(M)$.
Then, it is easy to see that $X_t:={\bf p}u_t$
solves the equation $$\vd X_t=\sqrt{2}u_t\circ \vd B_t+Z_t(X_t)\vd
t+N_t(X_t)\vd l_t,\ X_0=x $$ up to the life time $\zeta$.
By the It\^{o} formula, for any $f\in C_0^{1,2}([0,T_c)\times M)$ with
$N_tf_t:=N_tf(t,\cdot)|_{\partial M}=0$ (i.e. $N_tf_t$ is taken value automatically on $\partial M$),
$$f(t,X_t)-f(0,x)-\int_0^t\l({\partial_ s}+L_s\r)f(s,X_s)\vd
s=\sqrt{2}\int_0^t\l<u_s^{-1}\nabla^s f(s,\cdot)(X_s),\vd B_s\r>_s$$
is a
martingale up to the life time $\zeta$. So, we call $X_t$  the
reflecting diffusion process generated by $L_t$.

Throughout this paper, we only consider the case where the reflecting $L_t$-diffusion process
is non-explosive before $T_c$.   In this case,
$$P_{s,t}f(x):=\mathbb{E}(f(X_t)|X_s=x),\ x\in M,\ 0\leq s\leq t< T_c,\ f\in \mathscr{B}_b(M)$$
gives rise to a Markov  semigroup $\{P_{s,t}\}_{0\leq
s\leq t< T_c}$ on $\mathscr{B}_b(M)$, which is called the
Neumann semigroup generated by $L_t$.
Here and in what follows,
$\mathbb{E}$ and $\mathbb{P}$ (resp. $\mathbb{E}^x$ and $\mathbb{P}^x$) stand for the expectation
and probability taken for the underlying process (resp. the underlying process starting from $x\in M$).

\subsection{Kolmogorov equations}
Let
$$\mathscr{C}_N(L)=\{f\in C^{1,\infty}([0,T_c)\times M): N_tf_t|_{\partial M}=0, (L_t+\partial_t)f\in \mathscr{B}_b(M),\, t\in [0,T_c)\}.$$
 In this subsection, we now introduce the Kolmogorov equations for $P_{s,t}$ as follows.
 \begin{theorem}\label{2t1}
 For $f\in \mathscr{C}_N(L)$,  the following forward Kolmogorov equation holds,
\begin{align}\label{fKol}
   \frac{\partial}{\partial t}P_{s,t}f(t,x)=P_{s,t}\l(L_tf+{\partial_
   t}f\r)(t,x), \ \ 0\leq s<t<T_c.
   \end{align}
  Moreover, for $f\in \mathscr{C}_N(L)$, then the following (i) and (ii) hold
 \begin{itemize}
   \item [(i)]  for any $0\leq t<T_c$,  $P_{\cdot,t}f\in C^{1,2}([0,t]\times M)$ and the backward Kolmogorov equation
   \begin{align}\label{eq-2.2}
   \frac{\partial }{\partial s}P_{s,t}f=-L_sP_{s,t}f,\hspace{0.5cm}  0\leq s<t<T_c,
   \end{align}
 moreover,  $$N_sP_{s,t}f=0, \hspace{0.5cm} 0\leq s<t<T_c;$$
   \item [(ii)]   if
   $|\nabla^{\cdot}P_{\cdot, t}f|_{\cdot}$ is bounded on $[r,t]\times
   M$ and $t\in (0,T_c]$, then
   $$\frac{\partial }{\partial s}P_{r,s}\psi(P_{s,t}f)=P_{r,s}\l(\psi''(P_{s,t}f)|\nabla^sP_{s,t}f|_s^2\r),\  \  s\in[r,t],$$
   where $\psi\in C^2(\mathbb{R})$ with compact support in $[\inf f,\sup f]$.
 \end{itemize}
 \end{theorem}
To prove this theorem, we need the following two properties to  investigate the short time behavior of  the diffusion process first.
\begin{proposition}\label{2l1}
Let $X_t$ be a reflecting $L_t$-diffusion
process with $X_0=x\in M$, Then,
\begin{enumerate}
\item [$(a)$] if $x\in M^{\circ}$, then for $t_0\in [0,T_c)$, there exist
constants $r_0>0$ and $c_1>0$ such that $B_{t_0}(x,r_0)\in M^{\circ}$ and
$$\mathbb{P}^x(\sigma_r\leq t)\leq c_1e^{-r^2/16t},\ r\in [0,r_0],\ t\in [0,1\wedge T_c]$$
holds, where $\sigma_r:=\inf\{s: \rho_{t_0}(X_s,x)\geq r,\ s\in [0,T_c)\}$;
 \item [$(b)$] there exist
constants $r_0>0$ and  $c_2>0$ such that
$$\mathbb{P}^x(\tilde{\sigma}_r\leq t)\leq c_2e^{-r^2/16t},\ r\in [0,r_0],\ t\in [0,1\wedge T_c]$$
holds, where $\tilde{\sigma}_r:=\inf\{s:
\rho_s(X_s,x)\geq r,\ s\in [0,T_c)\}$.
 \end{enumerate}
\end{proposition}
\begin{proof}
 First, we prove (a). 
  Write ${\rho}_{t_0}(X_t):={\rho}_{t_0}(x,X_t)$ for simplicity.
By taking smaller $r_0$, we may and do assume that $B_{t_0}(x,r_0)\in M^{\circ}$ and ${\rho}_{t_0}\in C^{\infty}(M)$.
By  the It\^{o} formula, we obtain
\begin{align*}
\vd {{\rho}}_{t_0}^2(X_t)\leq  2\sqrt{2}{{\rho}}_{t_0}(X_t)\vd b_t+C_1\vd t,\ \ t\leq \sigma_{r}
\end{align*}
for some constant $C_1>0$,
where $b_t$ is a one-dimensional Brownian motion. Thus, for fixed $t>0$ and $\delta>0$,
$$Z_s:=\exp\l(\frac{\delta}{t}{\rho}_{t_0}(X_s)^2-\frac{\delta}{t}C_1s-4\frac{\delta^2}{t^2}\int_0^s {\rho}_{t_0}(X_u)^2\vd u\r),\qquad 0 \leq s\leq \sigma_{r}$$
is a supermartingale. Therefore,
\begin{align}
\mathbb{P}^x(\sigma_{r}\leq t)&=\mathbb{P}^x\l\{\max_{s\in [0,t]}{\rho}_{t_0}(X_{s\wedge\sigma_{r}})\geq r\r\}\leq \mathbb{P}^x\l\{\max_{s\in [0,t]}Z_{s\wedge \sigma_{r}}\geq e^{\delta r^2/t-\delta C_1-4\delta^2r^2/t}\r\}\nonumber\\
&\leq \exp{\l[C_1\delta-\frac{1}{t}(\delta r^2-4\delta^2r^2)\r]}.
\end{align}
The proof of (a) is completed by taking $\delta:=1/8$.

We now turn to prove (b).  Let $\phi \in
C^{1,\infty}([0,1]\times M)$ be constant outside ${\bf B}=\{(t,y)\in [0,1]\times M: \rho_t(x,y)\leq r_0 \}$  such that
$\phi\geq 1$ in ${\bf B}$, and the boundary  $\partial M$ in $B_{t}(x,r_0)$ is convex under $\tilde{g}_t:=\phi_t^{-2}g_t$ (For fixed time $t\in [0,T_c)$, see \cite{W07} for the
construction of $\phi_t=\varphi \circ \rho_t^{\partial}$, where $\varphi$ is differentiable on $[0,\infty)$. As $\rho_t^{\partial}$ is differentiable in $t$, we have that $\phi \in C^{1,\infty}([0,1]\times M)$).
Let $\tilde{\Delta}_{t}$ and $\tilde{\nabla}^t$ be respectively the Laplacian and the gradient operators induced by the metric $\tilde{g}_t$. Then, we have
$$\phi_t^2L_t=\tilde{\Delta}_t+(d-1)\phi_t\nabla^t \phi_t+\phi_t^2Z_t=: \tilde{\Delta}_t+\tilde{Z}_t,$$
and $X_t$ solves the SDE:
\begin{align}\label{SDE-phi}
\vd_{ I}X_t=\sqrt{2}\phi_t^{-1}u_t\vd B_t+\phi_t^{-2}\tilde{Z}_t(X_t)\vd t+\tilde{N}_t(X_t)\vd l_t,
\end{align}
where $\tilde{N}_t$ is the inward unit normal vector field of the boundary associated with the metric $\tilde{g}_t$ and $\vd_I$ denotes the It\^{o} differential\footnote{In local
coordinates, the It\^{o} differential for a continuous
semi-martingale $X_t$ on $M$ is given by (see e.g. \cite{Emery})
$$(\vd_I X_t)^k=\vd X^k_t+\frac{1}{2}\sum_{i,j=1}^d\Gamma_{i,j}^{k}(t,X_t)\vd \l<X^i, X^j\r>_t,\ \ 1\leq k\leq d,$$
where $\Gamma^k_{ij}(t,x)$ are the Christoffel symbols with respect to the metric $g_t$.} on $M$.
Let $\tilde{\rho}_t$ be the Riemannian distance  under the  metric $\tilde{g}_t$. By taking smaller $r_0$, we may and do assume that $\tilde{\rho}^2\in C^{1,\infty}({\bf B})$. Then,  we have that  there exists a constant $C_1>0$ such that
 $$(\partial_t+ L_t)\tilde{\rho}^2_t(x,\cdot)(y)=2\tilde{\rho}_t(x,y)L_t\tilde{\rho}_t(x,\cdot)(y)+2\tilde{\rho}_t(x,y)\partial_t\tilde{\rho}_t(x,y)+2|\nabla^t \tilde{\rho}_t|^2_t\leq C_1$$
 holds on ${\bf B}$. By the It\^{o} formula, for $r\in (0,r_0)$, we further obtain
 $$\vd \tilde{\rho}^2_t(x,X_t)\leq 2\sqrt{2}\phi^{-1}_t\tilde{\rho}_t(x,X_t)\vd b_t+C_1 \vd t,\ \ 0 \leq t\leq \tilde{\sigma}_{r},$$
where $b_t$ is a one-dimensional Brownian motion. Let $\tilde{\rho}_t(X_t):=\tilde{\rho}_t(x,X_t)$. Thus, for fixed $t>0$ and $\delta>0$,
$$Z_s:=\exp\l(\frac{\delta}{t}\tilde{\rho}_{s}(X_s)^2-\frac{\delta}{t}C_1s-4\frac{\delta^2}{t^2}\int_0^s \tilde{\rho}_{u}(X_u)^2\vd u\r),\qquad 0 \leq s\leq \tilde{\sigma}_{r}$$
is a supermartingale. Therefore,
\begin{align*}
\mathbb{P}^x(\tilde{\sigma}_{r}\leq t)&=\mathbb{P}^x\l\{\max_{s\in [0,t]}\tilde{\rho}_{s}(X_{s\wedge\tilde{\sigma}_{r}})\geq r\r\}\leq \mathbb{P}^x\l\{\max_{s\in [0,t]}Z_{s\wedge \tilde{\sigma}_{r}}\geq e^{\delta r^2/t-\delta C_1-4\delta^2r^2/t}\r\}\nonumber\\
&\leq \exp{\l[C_1\delta-\frac{1}{t}(\delta r^2-4\delta^2r^2)\r]}.
\end{align*}
The proof of (b) is completed by taking $\delta:=1/8$.
\end{proof}

\begin{proposition}\label{2l2}
Let $x\in \partial M$ and  $\tilde{\sigma} _r$ be the same as in Proposition
\ref{2l1} for a fixed constant $r>0$. Then,
\begin{enumerate}
  \item [$(a)$] $\mathbb{E}^xe^{\lambda l_{t\wedge\tilde{\sigma}_r}}<\infty   \mbox{ for\ any}\ \lambda>0$;
  \item [$(b)$] $\mathbb{E}^xl_{t\wedge
  \tilde{\sigma}_r}=\frac{2\sqrt{t}}{\sqrt{\pi}}+O(t^{3/2})$ holds for small
  $t>0$.
\end{enumerate}
\end{proposition}
\begin{proof}
By using Proposition \ref{2l1}, and applying the estimate for $\rho_t^{\partial}$, the proof is similar to that of \cite[Theorem 2.1]{W09a} for constant manifolds, we omit it here.
\end{proof}

 \begin{proof}
 By using the It\^{o} formula, the equality \eqref{fKol} follows directly. Moreover, (ii) can be calculated by combining \eqref{fKol} and \eqref{eq-2.2}. Thus
it suffices for us to  prove (i).

Since $L_t$ is strictly elliptic and the coefficient is $C^{\infty}([0,T_c)\times M)$,  it is easy to know from Malliavin calculus that $P_{\cdot,t}f\in C^{1,2}([0,t]\times M)$.
To prove (i), it suffices to consider $x\in M^{\circ}:=M\setminus\partial
 M$ and $s=0$. Let $r_0>0$ be such that $B_0(x, r_0)\in
 M^{\circ}$, and take $h\in C^{\infty}(M)$ such that
 $$h|_{B_0(x,r_0/2)}=1,\ \mbox{and}\ h|_{B_0(x,r_0)^c}=0.$$
 By the It\^{o} formula, we have
 $$\vd (hP_{s,t}f)(X_s)=\vd M_s+\l\{L_s(hP_{s,t}f)+h\frac{\vd}{\vd s}P_{s,t}f\r\}(X_s)\vd s,$$
where $M_s$ is a martingale. Then,
 \begin{align}\label{2e1}\lim_{s\downarrow
 0}\frac{\mathbb{E}^x(hP_{s,t}f(X_s))-P_{0,t}f(x)}{s}&=\lim_{s\downarrow
 0}\mathbb{E}^x\frac{1}{s}\int_0^s\l\{L_r(hP_{r,t}f)+h\frac{\vd}{\vd
 r}P_{r,t}f\r\}(X_r)\vd r\nonumber\\
 &=\l\{L_r(P_{r,t}f)+\frac{\vd}{\vd r}P_{r,t}f\big|_{r=0}\r\}(x).
 \end{align}
 On the other hand, by Proposition \ref{2l1},
 $$\mathbb{E}^x(hP_{s,t}f)(X_s)-P_{0,t}f(x)=\mathbb{E}^{x}((h-1)P_{s,t}f)(X_s)\leq \|f(h-1)\|_{\infty}e^{-c/s},\ s\in (0,1]$$
holds for some constant $c>0$. Combining this with (\ref{2e1}), we
conclude that
$$\frac{\vd }{\vd r}\big|_{r=0}P_{r,t}f(x)=-L_0P_{0,t}f(x).$$

  Let $x\in M$. If $N_sP_{s,t}f\neq 0$,  $0\leq s<t<T_c$. For
instance $N_0P_{0,t}f(x)>0$, there exist $r_0>0$, small
$t>t_0>0$ and $\varepsilon>0$, such that $N_rP_{r,t}f(x)>\varepsilon$ holds on
$B_r(s,2r_0)$ and $r\in (0,t_0)$. Moreover, we assume $f\geq
0$. Let $h\in C^{\infty}([0,T_c)\times M)$, $0 \leq h\leq 1$  such that
$$N_rh_r=0,\ \ h_r|_{B_r(x,r_0)}=1\ \ \mbox{and}\ \ h_r|_{B_r(x,2r_0)^c}=0.$$
By the It\^{o} formula,
\begin{align}\label{2eq3}
\mathbb{E}^x\l(h_sP_{s,t}f(X_s)\r)=&P_{0,t}f(x)+\int_0^sP_{0,r}\l[L_r(h_rP_{r,t}f)+\frac{\vd
h_r}{\vd r}\cdot P_{r,t}f+h_r\frac{\vd}{\vd r}P_{r,t}f\r](x)\vd
r\nonumber\\
&+\mathbb{E}^x\int_0^sh_r N_rP_{r,t}f(X_r)\vd l_r.
\end{align}
Moreover, by Proposition \ref{2l1},
\begin{align*}\mathbb{E}^x(h_sP_{s,t}f)(X_s)-P_{0,t}f(x)=&\mathbb{E}^{x}((h-1)P_{s,t}f)(X_s)\leq \|f(h-1)\|_{\infty}\mathbb{P}^x(\tilde{\sigma}_{r_0}\leq
s)\\
\leq & \|f(h-1)\|_{\infty}e^{-c/s} ,\ s\in (0,1]
\end{align*}
and
\begin{align*}&\lim_{s\downarrow
0}\frac{1}{s}\int_0^sP_{0,r}\l[L_r(h_rP_{r,t}f)+\frac{\vd h_r}{\vd
r}\cdot P_{r,t}f+h_r\frac{\vd}{\vd r}P_{r,t}f\r](x)\vd
r\\
=&L_0P_{0,t}f+\frac{\vd }{\vd
r}\big|_{r=0}P_{r,t}f+\partial_r\big|_{r=0}h(r,x)P_{0,t}f=0.
\end{align*}
The last equality comes from (a) and  $h(r,x)=1$. Combining this with
(\ref{2eq3}) we arrive at
$$\varepsilon\lim_{s\rightarrow 0}\frac{1}{s}\mathbb{E}^xl_{s\wedge \tilde{\sigma}_{r_0}}= 0,$$
which is impossible according to Proposition \ref{2l2}. We then complete the proof.

%
 \end{proof}

\begin{remark}\label{rem-add-1}
 For a fixed $T\in (0,T_c)$,  from Theorem \ref{2t1}, we see that $P_{t,T}f$ is a solution to  the following heat equation with Neumann boundary condition,
\begin{align}\label{cauchy}
\begin{cases}
\partial_t u(\cdot,x)(t)=-L_tu(t,\cdot)(x),  & (t,x)\in [0,T]\times M,\\
u(T,x)=f(x),& x\in M,\\
N_tu(t,\cdot)(x)=0, &  (x,t)\in \partial M \times (0,T].
\end{cases}
\end{align}
Then let $(X_t^T)_{t\in [0,T]}$ be the reflecting
$L_{(T-t)}$-diffusion process with semigroup $\{\overline{P}_{s,t}\}_{0\leq s\leq
t\leq T}$.  It is obvious that $\overline{P}_{T-t,T}f,\ t\in [0,T]$ solves the Neumann problem
\begin{align}\label{Chy}
\begin{cases}
\partial_t u(\cdot,x)(t)=L_tu(t,\cdot)(x),& (t,x)\in [0,T]\times M,\\
u(0,x)=f(x), & x\in M,\\
N_tu(t,\cdot)(x)=0, & x\in \partial M,\  t\in (0,T].
\end{cases}
\end{align}
Actually, the theory, presented in this paper, is
meant to be applied to the solution of \eqref{Chy}.
\end{remark}

\subsection{Derivative formula and applications to characterizing $\mathcal{R}_t^{Z}$ and ${\rm II}_t$ }
\quad
This subsection is devoted to the derivative formula for the Neumann semigroup, which is further applied to characterizing $\mathcal{R}_t^{Z}$ and ${\rm II}_t$.

Before moving on, let us introduce some basic notations first.
For $u\in \mathcal{O}_t(M)$,  the lift operators $\mathcal{R}_t^{Z}(u),\ {\rm II}_t(u)\in \mathbb{R}^d\otimes\mathbb{R}^d$ are defined by
\begin{align*}
&\mathcal{R}^{Z}_t(u)(a,b)=\l<\mathcal{R}^{Z}_t(u)a,b\r>=\mathcal{R}_t^{Z}(ua,
u b),\ \
 {\rm II}_t(u)(a,b)={\rm II}_t({\bf p}^t_{\partial}ua,{\bf p}^t_{\partial}ub),\ \ a,b\in \mathbb{R}^d,
\end{align*}
where for $x\in \partial M$, ${\bf p}^t_{\partial}: T_{x}M\rightarrow T_x\partial M$ is the project operator on $(M,g_t)$.
We now introduce  the derivative formula for the  Neumann semigroup first.
\begin{theorem}\label{Bis}
Let $0\leq s<t<T_c$ and $x\in M$ be fixed.
Let $K\in C([0,T_c)\times M)$ and $\sigma\in C([0,T_c)\times \partial
M)$ be such that $\mathcal{R}_t^{Z}\geq
K_t$ and ${\rm II}_t\geq \sigma_t$. Assume that
\begin{align}\label{2qt}
\sup_{u\in [s,t]}\mathbb{E}\l(\exp\l\{-\int_s^uK(r, X_r)\vd
r-\int_s^u\sigma(r, X_r)\vd l_r\r\}\bigg | X_s=x\r)<\infty.
\end{align}
Then there exists a progressively measurable process
$\{Q_{s,r}\}_{s\leq r\leq t}$ on $\mathbb{R}^d\otimes
\mathbb{R}^d$ such that
$$Q_{s,s}=I,\ \ \|Q_{s,r}\|\leq \exp\l[-\int_s^rK(u, X_u)\vd
u-\int_s^r\sigma(u, X_u)\vd l_u\r],\ \ r\in [s,t].$$
Moreover,  for any
$f\in C_c^1(M)$ with $|\nabla^{\cdot}P_{{\cdot},t}f|_{\cdot}$ being bounded on
$[s,t]\times M$, and  $h\in C^1([s,t])$ satisfying $h(s)=0, h(t)=1$,
it holds
\begin{align}\label{2Bis}
(u_s)^{-1}\nabla^sP_{s,t}f(x)&=\mathbb{E}\l\{Q_{s,t}^*u_t^{-1}\nabla^tf(X_t)\big|
X_s=x\r\} \notag\\
&=\frac{1}{\sqrt{2}}\mathbb{E}\l\{f(X_t)\int_s^th'(r)Q^*_{s,r}\vd
B_r\bigg| X_s=x\r\}.
\end{align}
\end{theorem}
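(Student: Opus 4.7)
The plan is to follow the Bismut--Elworthy--Li strategy adapted to the time-inhomogeneous Neumann setting. First, define the $\mathbb{R}^d\otimes\mathbb{R}^d$-valued process $Q_{s,r}$ pathwise along the horizontal lift $u_r$ as the unique solution of the linear equation
$$Q_{s,r} = I - \int_s^r Q_{s,\tau}\,\mathcal{R}^Z_\tau(u_\tau)\,\vd\tau - \int_s^r Q_{s,\tau}\,\mathbb{I}_\tau(u_\tau)\,\vd l_\tau,\qquad r\in[s,t].$$
This is well defined since $u_\tau\in\mathcal{O}_\tau(M)$, $l$ is a continuous increasing process supported on $\{X_\tau\in\partial M\}$ where $\mathbb{I}_\tau(u_\tau)$ is meaningful, and both $\mathcal{R}^Z_\tau$ and $\mathbb{I}_\tau$ are continuous. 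Differentiating $|Q_{s,r}^* v|^2$ in $r$ and invoking the pointwise lower bounds $\mathcal{R}^Z_r\geq K(r,\cdot)$, $\mathbb{I}_r\geq\sigma(r,\cdot)$ yields a Gronwall inequality whose integration produces the announced exponential norm bound.

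Next, set $M_r:=Q_{s,r}^* u_r^{-1}\nabla^r P_{r,t}f(X_r)$ for $r\in[s,t]$ and show it is a local martingale. Applying It\^o's formula to the $\mathbb{R}^d$-valued map $u\mapsto u^{-1}\nabla^r P_{r,t}f(\mathbf{p}u)$ on $\mathcal{O}_\cdot(M)$, together with the backward Kolmogorov equation and the Neumann identity $N_rP_{r,t}f\equiv 0$ from Theorem \ref{2t1}, the bulk drift of $u_r^{-1}\nabla^r P_{r,t}f(X_r)$ simplifies to $\mathcal{R}^Z_r(u_r)\,u_r^{-1}\nabla^r P_{r,t}f(X_r)\,\vd r$; the $-\tfrac12\partial_r g_r$ summand inside $\mathcal{R}^Z_r$ arises from the vertical drift term of the SDE for $u_r$, which is precisely what keeps $u_r\in\mathcal{O}_r(M)$. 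The boundary contribution, obtained by differentiating the Neumann identity tangentially along $\partial M$ and invoking the Weingarten relation for $N_r$, equals $\mathbb{I}_r(u_r)\,u_r^{-1}\nabla^r P_{r,t}f(X_r)\,\vd l_r$. These two finite-variation terms are cancelled exactly by the defining equation of $Q_{s,r}^*$ after It\^o integration by parts, so $M_r$ is a local martingale. The integrability hypothesis (\ref{2qt}), combined with the norm bound on $Q_{s,r}$ and the assumed boundedness of $|\nabla^s P_{s,\cdot}f|_s$, upgrades $M$ to a true martingale, and equating $\mathbb{E}[M_s]$ with $\mathbb{E}[M_t]$ produces the first identity of (\ref{2Bis}).

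For the Bismut form, apply It\^o to the $\mathbb{R}^d$-valued process
$$N_r:=P_{r,t}f(X_r)\int_s^r h'(\tau)\,Q_{s,\tau}^*\,\vd B_\tau,\qquad r\in[s,t].$$
By Theorem \ref{2t1} the scalar factor $P_{r,t}f(X_r)$ differs from a local martingale only through the boundary contribution $N_rP_{r,t}f\cdot\vd l_r=0$, so the sole finite-variation term of $N_r$ comes from the quadratic covariation, which equals $\sqrt{2}\,h'(r)\,Q_{s,r}^* u_r^{-1}\nabla^r P_{r,t}f(X_r)\,\vd r$. Taking expectations and substituting the first identity of (\ref{2Bis})---which forces $\mathbb{E}[Q_{s,\tau}^* u_\tau^{-1}\nabla^\tau P_{\tau,t}f(X_\tau)]$ to equal $u_s^{-1}\nabla^s P_{s,t}f(x)$ independently of $\tau$---the $r$-integral collapses through $h(t)-h(s)=1$ and yields the desired formula.

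The chief obstacle is the boundary $\vd l_r$ term in the It\^o expansion of $M_r$: unlike the time-homogeneous convex case, where non-negativity of $\mathbb{I}_r$ permits one to discard it in one-sided estimates, here it must be retained and its coefficient identified exactly as $-\mathbb{I}_r(u_r)$ acting on the gradient, which is precisely what forces the $\mathbb{I}_r$-term into the definition of $Q_{s,r}$ and, via the lower bound $\sigma$, accommodates the non-convex regime. A secondary but genuinely time-inhomogeneous point is that the vertical drift of $u_r$ in $\mathcal{O}_r(M)$ contributes the $-\tfrac12\partial_r g_r$ correction appearing in $\mathcal{R}^Z_r$, in the spirit of the $g_t$-Brownian construction of Coulibaly et al.\ \cite{ACT}.
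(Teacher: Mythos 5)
Your overall strategy (a multiplicative functional $Q$ damped by $\mathcal{R}^Z$ and $\mathbb{I}$, an It\^o computation for $Q^*u^{-1}\nabla P_{\cdot,t}f(X_\cdot)$, then the Bismut form via the martingale representation of $f(X_t)$) is the same as the paper's, and your treatment of the bulk term, the norm bound, and the second identity is fine in outline. But there is a genuine gap at the boundary step. The Neumann identity $\langle N_r,\nabla^r P_{r,t}f\rangle_r=0$ holds only \emph{on} $\partial M$, so it can be differentiated only in directions tangent to $\partial M$; the Weingarten relation therefore gives
\begin{equation*}
{\rm Hess}^r_{P_{r,t}f}(v,N_r)=\mathbb{I}_r(v,\nabla^r P_{r,t}f),\qquad v\in T\partial M,
\end{equation*}
and says nothing about ${\rm Hess}^r_{P_{r,t}f}(N_r,N_r)$. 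Consequently, when you expand $\vd\,\langle \nabla^r P_{r,t}f(X_r),u_rQ a\rangle_r$, the boundary drift is \emph{not} $\mathbb{I}_r(u_r)$ acting on the full vector: splitting $u_rQa$ into tangential and normal parts, only the tangential part is cancelled by the $\mathbb{I}_r(u_r)\,\vd l_r$ term in your equation for $Q$ (recall $\mathbb{I}_r(u)$ is defined through the projection ${\bf P}_\partial$), and there remains the term ${\rm Hess}^r_{P_{r,t}f}(N_r,N_r)\,\langle u_rQa,N_r\rangle_r\,\vd l_r$, which has no sign and no reason to vanish. So with your direct definition of $Q_{s,r}$ the process $Q_{s,r}^*u_r^{-1}\nabla^r P_{r,t}f(X_r)$ is not a local martingale, and the "exact cancellation" you invoke fails.

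This is precisely why the paper (following Hsu's multiplicative-functional construction) does not define $Q$ by your ODE but by penalized approximations $Q^{(n)}$ solving the same equation with the additional rank-one damping term $-\tfrac12\bigl(n+2\sigma(s,X_s)^+\bigr)\bigl((Q^{(n)}_s)^*u_s^{-1}N_s\bigr)\otimes\bigl(u_s^{-1}N_s\bigr)\vd l_s$. This term does not alter the pairing with $\nabla^sP_{s,t}f$ (since $\langle N_s,\nabla^sP_{s,t}f\rangle_s=0$ on $\partial M$), but it strengthens the norm estimate to $\vd\|Q^{(n)}_sa\|^2\le -2\|Q^{(n)}_sa\|^2[K\vd s+\sigma\vd l_s]-n\langle u_sQ^{(n)}_sa,N_s\rangle_s^2\vd l_s$, whence $\lim_{n\to\infty}\mathbb{E}\int_0^{t\wedge\zeta_m}\|(Q^{(n)}_s)^*u_s^{-1}N_s\|^2\vd l_s=0$ (the paper's (3.4)). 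Only by this estimate, together with localization by $\zeta_m$, extraction of a weakly convergent subsequence $Q^{(n_k)}\to Q$, and the assumed boundedness of $|\nabla^\cdot P_{\cdot,t}f|_\cdot$, does the uncontrolled normal Hessian contribution disappear in the limit, yielding the first identity in \eqref{2Bis}; the second identity is then obtained as you sketch, but again along the approximations. Without this penalization-and-limit device your construction of $Q$ and the claimed local-martingale property do not go through on a manifold with boundary.
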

\begin{proof}
Without loss of generality, we assume $s=0$, and simply  denote
$Q_{0,t}$ by $Q_t$.

 Following the idea of \cite[Theorem 4.2]{Hsu}, we need to construct the multiplicative functional  $Q_s$ first. For any $n\geq 1$, let
$Q_s^{(n)}$ solve the equation
$$ \begin{cases}
\vd Q_s^{(n)}=-\mathcal{R}_s^{Z}(u_s)Q_s^{(n)}\vd
s-{\rm II}_s(u_s)Q_s^{(n)}\vd l_s\\
\qquad \qquad
-\frac{1}{2}(n+2\sigma(s,X_s)^+)\l[(Q_s^{(n)})^*u_s^{-1}N_s\r]\otimes\l(u_s^{-1}N_s\r)\vd
l_s,\\
Q_0^{(n)}=I.
\end{cases}$$
 It is easy to see that for any $a\in \mathbb{R}^d$,
\begin{align*}
 \vd \|Q_s^{(n)}a\|^2&=2\l<\vd Q^{(n)}_sa,
Q_s^{(n)}a\r>\\
&=-2\mathcal{R}_s^{Z}(u_sQ_s^{(n)}a, u_sQ_s^{(n)}a)\vd
s-2{\rm II}_s({\bf
p}^s_{\partial}u_sQ_s^{(n)}a,{\bf p}^s_{\partial}u_sQ_s^{(n)}a)\vd l_s\\
&\quad -[n+2\sigma(s,X_s)^+]\l<u_sQ_s^{(n)}a, N_s\r>_s^2\vd
l_s\\
&\leq -2\|Q_s^{(n)}a\|^2\l[K(s, X_s)\vd s+\sigma(s, X_s)\vd
l_s\r]-n\l<u_sQ_s^{(n)}a, N_s\r>_s^2\vd l_s,
\end{align*}
where $\|\cdot\|$ is the operator norm on $\mathbb{R}^d$ and $\l<\cdot,\cdot\r>$  denotes the inner product on $\mathbb{R}^d$.
Therefore,
\begin{align}\label{2eq1}\|Q_s^{(n)}\|^2\leq
\exp{\l[-2\int_0^sK(r, X_r)\vd r-2\int_0^s\sigma(r, X_r)\vd
l_r\r]}<\infty,\end{align}
and for any $m\geq 1$,
\begin{align}\label{2eq2}
&\lim_{n\rightarrow\infty}\mathbb{E}^x\int_0^{t\wedge\zeta_m}\|(Q_s^{(n)})^*u_s^{-1}N_s\|^2\vd
l_s \nonumber\\
&\quad \leq
\lim_{n\rightarrow\infty}\l(\frac{1}{n}+\frac{1}{n}\mathbb{E}^x\int_0^{t\wedge\zeta_m}2\|Q_s^{(n)}\|^2\l[|K|(s,X_s)\vd
s+|\sigma|(s, X_s)\vd l_s\r]\r)=0,
\end{align}
where the second equality follows from Proposition \ref{2l1},
(\ref{2eq1}) and the boundedness of $K$ and $\sigma$ on
$\{(s,y): \rho_s(x,y)\leq m,\ 0\leq s\leq t\}$. Combining this with (\ref{2eq1}) and (\ref{2qt}), we see that
$$\mathbb{E}^x\int_0^t\sup_{n\geq 1}\|Q_s^{(n)}\|\vd s+\mathbb{E}^x\sup_{n\geq 1}\|Q_t^{(n)}\|<\infty.$$
Thus, there exists a subsequence $\{Q^{(n_k)}\}$ and a progressively
measurable process $Q$ such that for any bounded measurable process
$(\varphi_s)_{s\in[0,t]}$ on $\mathbb{R}^d$ and any
$\mathbb{R}^d$-valued random variable $\eta$, it holds
$$\lim_{k\rightarrow\infty}\l\{\mathbb{E}^x\int_0^t(Q_s^{(n_k)}-Q_s)\varphi_s\vd s+\mathbb{E}^x(Q_t^{(n_k)}-Q_t)\eta\r\}=0.$$

 Next, we turn to prove the first equality in \eqref{2Bis}. By using the It\^{o} formula,  we have
\begin{align}\label{S2-add1}
\vd ({\bf d}P_{s,t}f)(X_s)=&\nabla^s_{u_s{\bf d} B_s}({\bf d}
P_{s,t}f)(X_s)+{\rm
Ric}_s^{Z}(\cdot,\ \nabla^sP_{s,t}f)(X_s)\vd s\nonumber\\
&+\nabla^s_{N_s}({\bf d}P_{s,t}f)(X_s)\vd l_s,
\end{align}
where ${\bf d}$ is the exterior differential \footnote{ For 0-form $f$, its exterior differential ${\bf d}f$ is defined by $${\bf d} f(X):=X(f)=\l<\nabla^tf, X\r>_t,\ \ \mbox{for}\ X\in TM.$$} and  $${\rm
Ric}_s^{Z}(X,Y):={\rm Ric}_s(X,Y)-\l<\nabla^s_XZ_s, Y\r>_s,\quad  X,Y\in TM.$$
Now for any $a\in \mathbb{R}^d$,
\begin{align*}
\vd u_sQ_s^{(n)}a=&-{\rm
Ric}_s^{Z}(u_sQ_s^{(n)}a, \cdot)\vd s-{\rm II}_s({\bf
p}_{\partial}^su_sQ_s^{(n)}a,\cdot)\vd
l_s\\
&-\frac{1}{2}(n+2\sigma(s,X_s))^+\l<N_s,u_sQ_s^{(n)}a\r>_s\l<N_s,\cdot\r>_s\vd
l_s.
\end{align*}
By this and \eqref{S2-add1}, we have
\begin{align}\label{add-3}
\vd \l<\nabla^sP_{s,t}f(X_s), u_sQ_s^{(n)}a\r>_s=&{\rm
Hess}^s_{P_{s,t}f}(u_sQ_s^{(n)}a,u_s\vd B_s)+{\rm
Hess}^s_{P_{s,t}f}(u_sQ_s^{(n)}a,N_s)\vd l_s\nonumber\\
&-{\rm II}_s({\bf
p}_{\partial}^su_sQ_s^{(n)}a,\nabla^sP_{s,t}f(X_s))\vd l_s.
\end{align}
Moreover, since for any $v\in T_y\partial M$, $y\in \partial M$, we have
\begin{align*}
0=v\l<N_s, \nabla^sP_{s,t}f\r>_s(y)=\l<\nabla^s_{v}
N_s, \nabla^sP_{s,t}f\r>_s(y)+{\rm
Hess}^s_{P_{s,t}f}(v,N_s),
\end{align*}
which implies
$${\rm Hess}^s_{P_{s,t}f}(v,N_s)={\rm II}_s(v,\nabla^sP_{s,t}f)(y).$$
Combining this with \eqref{add-3}, we arrive at
\begin{align}\label{add-4}
&\vd \l<\nabla^sP_{s,t}f(X_s), u_sQ_s^{(n)}a\r>_s\nonumber\\
&={\rm Hess}^s_{P_{s,t}f}(u_sQ_s^{(n)}a,u_s\vd B_s)+{\rm
Hess}^s_{P_{s,t}f}(N_s,N_s)\l<u_sQ_s^{(n)}a, N_s\r>_s\vd l_s.
\end{align}
It follows from \eqref{add-4}, (\ref{2eq2}) and the boundedness of
$|\nabla^{\cdot}P_{\cdot,t}f|_{\cdot}$ on $[0,t]\times M$ that
\begin{align*}
\l<\nabla^0P_{0,t}f, u_0a\r>_0&=\lim_{m\rightarrow
\infty}\lim_{k\rightarrow \infty}\mathbb{E}^x\l<\nabla^{t\wedge
\zeta_m}P_{t\wedge \zeta_m, t}f(X_{t\wedge\zeta_m}), u_{t\wedge
\zeta_m}Q_{t\wedge \zeta_m}^{(n_k)}a\r>_{t\wedge \zeta_m}\\
&=\lim_{m\rightarrow\infty}\lim_{k\rightarrow
\infty}\mathbb{E}^x\l\{{\bf 1}_{\{t\leq \zeta_m\}}\l<\nabla^tf(X_t),
u_tQ_t^{(n_k)}a\r>_t\r\}\\
&=\mathbb{E}^x\l<\nabla^tf(X_t),u_tQ_ta\r>_t.
\end{align*}
This implies the first equality.

Finally, it only leaves us to show the second equality.  Since by the It\^{o} formula, we obtain
$$\vd P_{s,t}f(X_s)=\sqrt{2}\l<\nabla^sf(X_s),u_s\vd B_s\r>_s.$$
Therefore, we have
$$f(X_t)=P_{0,t}f(x)+\sqrt{2}\int_0^t\l<\nabla^sP_{s,t}f(X_s),u_s\vd B_s\r>_s.$$
So, for any $a\in \mathbb{R}^d$ and $m\geq 1$, it follows from
(\ref{2eq1}), (\ref{2eq2}) and the boundedness of
$\{|\nabla^sP_{s,t}f|_{s}\}_{s\in [0,t]}$ that
\begin{align*}
&\frac{1}{\sqrt{2}}\mathbb{E}^x\l\{f(X_t)\int_0^th'(s)\l<Q_sa,\vd
B_s\r>\r\}=\mathbb{E}^x\l\{\int_0^th'(s)\l<u_sQ_s a,
\nabla^sP_{s,t}f\r>_s(X_s)\vd s\r\}\\
&=\lim_{k\rightarrow\infty }
\mathbb{E}^x\l\{\int_0^th'(s)\l<u_sQ_s^{(n_k)}a,
\nabla^sP_{s,t}f\r>_s(X_s)\vd s\r\}\\
&=\lim_{m\rightarrow \infty}\lim_{k\rightarrow\infty
}\int_0^th'(s)\mathbb{E}^x\l\{\l<u_{s\wedge \zeta_m}Q_{s\wedge
\zeta_m}^{(n_k)}a,\nabla ^{s\wedge \zeta_m}P_{s\wedge
\zeta_m,t}f\r>_{s\wedge \zeta_m}(X_{s\wedge
\zeta_m})\r\}\vd s\\
&=\int_0^th'(s)\l<u_0a, \nabla^0P_{0,t}f\r>_0(x)\vd
s\\
&=\l<\nabla^0 P_{0,t}f(x), u_0a\r>_0.
\end{align*}
We complete the proof.
\end{proof}

 By localizing the process on a fixed domain, we obtain the following  local version of the derivative formula directly.
\begin{corollary}\label{2Bismut}  Assume $\mathcal{R}_r^{Z}\geq K_r$ and
${\rm II}_r\geq \sigma_r$ for some $K \in C([0,T_c)\times M)$ and $\sigma\in C([0,T_c)\times \partial M)$. Let $0\leq s\leq t<T_c$, $ x\in M$ and $D$
be a compact domain of $M$ such that $x\in
D^{\circ}$, the inner set of $D$. Let $X_t$ be a reflecting $L_t$-diffusion process starting from $x$ at time $s$ and  $\tau_D=\inf\{t\in [s,T_c): X_t\in \partial D, X_s=x\}$. Then for all $0\leq s\leq r\leq t$,  there exists a progressively
measurable process $\{Q_{s,r}\}_{r\in [s,t]}$ on
$\mathbb{R}^d\otimes\mathbb{R}^d$ such that
$$Q_{s,s}=I,\  \|Q_{s,r}\|\leq \exp{\l[-\int_s^{r\wedge\tau_D}K(u, X_u)\vd u-\int_s^{r\wedge\tau_D}\sigma(u,X_u)\vd l_u\r]}.$$
In addition, for any $\mathbb{R}_{+}$-valued process $h$ satisfying
$h(s)=0$, $h(r)=1$ for $r>t\wedge \tau_D$ and
$$\mathbb{E}\l(\int_s^th'(r)^2\vd r\r)^{\alpha}<\infty$$ for some
$\alpha>1/2$, it holds
$$u_s^{-1}\nabla^sP_{s,t}f(x)=\frac{1}{\sqrt{2}}\mathbb{E}\l\{f(X_{t\wedge\tau_D})\int_s^th'(r)Q_{s,r}^*\vd B_r\bigg| X_s=x\r\},\ \ f\in \mathscr{B}_b(M).$$
\end{corollary}

 By using the derivative formula established above, we have the following formulae to characterize $\mathcal{R}_t^{Z}$ and ${\rm II}_t$, respectively. When the metric is fixed,  the formulae for ${\rm Ric}$  were established  in \cite{BE} and \cite[Propositions 2.1 and 2.6]{Bakry}, and the formulae for second fundamental form were  proved by F.-Y. Wang \cite{WSe}. There formulae are always applied to proving  that some functional inequalities  imply corresponding curvature conditions.
\begin{theorem}\label{4t1}
For each $s\in [0,T_c)$, let $x\in M^{\circ}$ $($the inner set of $M$$)$ and  $X\in T_xM$  with
$|X|_s=1$. Let $f\in C_0^{\infty}(M)$ such that
  $f=0$ around the boundary, ${\rm Hess}_f^s(x)=0$ and $\nabla^sf=X$. Set $f_n=f+n$ for $n\geq 1$. Then,\\
 $(i)$ for any
$p>0$,
\begin{align}
\mathcal{R}^Z_s(X,X)
&=\lim_{t\downarrow
s}\frac{P_{s,t}|\nabla^tf|^p_t(x)-|\nabla^sP_{s,t}f|^p_s(x)}{p(t-s)};\label{1-2}
\end{align}
 $(ii)$
 for any $p>1$,
 \begin{align*}
\mathcal{R}^Z_s(X,X)
&=\lim_{n\rightarrow\infty}\lim_{t\downarrow s
 }\frac{1}{t-s}\l\{\frac{p[P_{s,t}f_n^2-(P_{s,t}f_n^{\frac{2}{p}})^p]}{4(p-1)(t-s)}-|\nabla^sP_{s,t}f|_s^2\r\}(x)\nonumber\\
 &=\lim_{n\rightarrow\infty}\lim_{t\downarrow s}\frac{1}{t-s}\l\{P_{s,t}|\nabla^tf|^2_t-\frac{p[P_{s,t}f_n^2-(P_{s,t}f_n^{\frac{2}{p}})^p]}{4(p-1)(t-s)}\r\}(x);
\end{align*}
$(iii)$ $\mathcal{R}^Z_s(X,X)$
is equal to each of the following limits:
\begin{align*}
&\lim_{n\rightarrow\infty}\lim_{t\downarrow
s}\frac{1}{(t-s)^2}\l\{(P_{s,t}f_n)\l[P_{s,t}(f_n\log
f_n)-(P_{s,t}f_n)\log
P_{s,t}f_n\r]-(t-s)|\nabla^sP_{s,t}f|_s^2\r\}(x);\\
&\lim_{n\rightarrow\infty}\lim_{t\downarrow
s}\frac{1}{4(t-s)^2}\l\{4(t-s)P_{s,t}|\nabla^tf|^2_t+(P_{s,t}f_n^2)\log
P_{s,t}f_n^2-P_{s,t}{f_n^2\log f_n^2}\r\}(x).
\end{align*}
\end{theorem}
\begin{proof}
Without loss of generality, we only consider $s=0$. Let $r>0$ and $t_0\in (0,T_c)$ be such that $B_t(x,r)\subset M^{\circ},\ t\in [0,t_0]$ and
$|\nabla^tf|_t\geq \frac{1}{2}$ on $\{(t,x):t\in [0,t_0], x\in B_t(x,r)\subset M^{\circ}\}$. Due to Proposition \ref{2l1}, the proof of \cite[Theorem 4.1]{cheng}  works for the
present setting by replacing $s$ with $s\wedge \tilde{\sigma}_r$, where
$\tilde{\sigma}_r:=\inf\{s: X_s\notin B_s(x,r), \ X_0=x,\ s\in [0,t_0]\}$ and set $t_0=\inf\varnothing$ by convention, so that the
boundary condition needs not to be considered.

To avoid redundancy, we only prove (i) to explain the idea.  By Proposition \ref{2l1} and ${\rm Hess}_f^0(x)=0$, we have
\begin{align}\label{add-eq-8}
P_{0,t}|\nabla^tf|_t^p&=\mathbb{E}^x\{|\nabla^t f|_t^p(X_{t\wedge \tilde{\sigma}_r})\}+{\rm o}(t)\nonumber\\
&=|\nabla^0f|_0^p+\l[\frac{p}{2}|\nabla^0f|_0^{p-2}L_0|\nabla^0f|_0^2-\frac{p}{2}|\nabla^0f|_0^{p-2}\partial_tg_t|_{t=0}(\nabla^0f,\nabla^0f)\r]t+{\rm o}(t),
\end{align}
where the second equality comes from the following formula,
$$\partial_t|\nabla^t f|_t^2=-\partial_t g_t(\nabla^tf, \nabla^t f).$$
Moreover, since $f\in C_0^{\infty}(M)$ and $f=0$ around the boundary, by the Kolmogorov
equation,
$$\frac{\vd}{\vd t}|\nabla^0 P_{0,t}f|^p_0|_{t=0}=p|\nabla^0f|_0^{p-2}\l<\nabla^0L_0f,\nabla^0f\r>_0,$$
we have
\begin{align*}
|\nabla^0P_{0,t}f|_0^p=|\nabla^0f|_0^p+p|\nabla^0 f|_0^{p-2}\l<\nabla^0L_0f,\nabla^0 f\r>_0 t+{\rm o}(t).
\end{align*}
Combining  this with \eqref{add-eq-8}  yields \eqref{1-2} for $s=0$.
\end{proof}
\begin{theorem}\label{II-form}
For each $s\in [0,T_c)$, let $x\in \partial M$ and $X\in T_xM$ with $|X|_s=1$. Then for any constant $p>0$ and
 $f\in C_0^{\infty}(M)$ such that $\nabla^s f(x)=X$,  it holds
\begin{align}\label{add-eq-10}
{\rm II}_s(X,X)&=\lim_{t\downarrow s}\frac{\pi}{2p\sqrt{t-s}}\l\{P_{s,t}|\nabla^{t}f|_{t}^p-|\nabla^sf|_s^p\r\}(x)\nonumber\\
&=\lim_{t\downarrow
s}\frac{\pi}{2p\sqrt{t-s}}\l\{P_{s,t}|\nabla^{t}f|_{t}^p-|\nabla^s P_{s,t}f|_s^p\r\}(x).
\end{align}
If moreover $f>0$, then for any $p\in [1,2]$,
\begin{align*}
{\rm II}_s(X,X)&=-\lim_{t\downarrow
s}\frac{3}{8}\sqrt{\frac{\pi}{t-s}}\l\{|\nabla^sf|^2_s+\frac{p[(P_{s,t}f^{2/p})^p-P_{s,t}f^2]}{4(p-1)(t-s)}\r\}(x)\\
&=-\lim_{t\downarrow
s}\frac{3}{8}\sqrt{\frac{\pi}{t-s}}\l\{|\nabla^sP_{s,t}f|^2_s+\frac{p[(P_{s,t}f^{2/p})^p-P_{s,t}f^2]}{4(p-1)(t-s)}\r\}(x),
\end{align*}
where when $p=1$, we set $\frac{(P_{s,t}f^{2/p})^p-P_{s,t}f^2}{p-1}$ as the following limit
\begin{align*}
\lim_{p\downarrow
1}\frac{(P_{s,t}f^{2/p})^p-P_{s,t}f^2}{p-1}=(P_{s,t}f^2)\log P_{s,t}f^2- P_{s,t}(f^2\log f^2).
\end{align*}
\end{theorem}
\begin{proof}
  Due to  Propositions \ref{2l1} and  \ref{2l2}, the proof is
straightforward. For readers' convenience, we include the proof of the first equality in \eqref{add-eq-10}.

Let $r>0$ and $t_0\in (0,T_c)$ such that  $|\nabla^tf|_t\geq \frac{1}{2}$ holds on $\{(x,t): x\in B_t(x,r), t\in [0,t_0]\}$. Let
$\tilde{\sigma}_r:=\inf\{t\in [0,t_0]: X_t\notin B_t(x,r)\}$ and $t_0:=\inf \varnothing$. As $N_s|\nabla^s f|_s^2=2{\rm II}_s(\nabla^s f, \nabla^s f)$ holds on $\partial M$. So, by using the It\^{o} formula and  Propositions \ref{2l1} and  \ref{2l2},
\begin{align*}
P_{0,t}|\nabla^tf|_t^p(x)=&\mathbb{E}^x|\nabla^t f|_t^p(X_{t\wedge \sigma_r})+{\rm o}(t)\\
=&|\nabla^0f|_0^p(x)+\mathbb{E}^x\int_0^{t\wedge \sigma_r}(L_s+\partial_s)|\nabla^s f|_s^p(X_s)\vd s\\
&+p\{|\nabla^s f|_s^{p-2}{\rm II}_s(\nabla^sf,\nabla^s f)\}(X_s)\vd l_s+{\rm o}(t)\\
=&|\nabla^0 f|_0^p(x)+\frac{2p\sqrt{t}}{\sqrt{\pi}}{\rm II}_0(X,X)+{\rm o}(\sqrt{t})
\end{align*}
holds for small $t>0$. This proves the first equality in \eqref{add-eq-10}.

Note that the additional terms, derived from the time derivative of the metric, have the order ${\rm o}(t)$. Here, from the discussion above,   we find that it does not need to take care of these terms larger than order ${\rm o}(\sqrt{t})$. Thus, the calculation is similar to that in the fixed metric case. For the rest of the proof, we refer the reader to
\cite[Theorem 1.2]{WSe} and \cite[Theorem 3.2.4]{Wbook2} for details.

\end{proof}

\section{Proof of main results}
\hspace{0.5cm}
 In Subsection 3.1, we construct the coupling processes under convex flows by parallel displacement and reflection first,  then using coupling method, we give the proof of Theorem \ref{main-th-1}. In Subsection 3.2, we complete the proof of Theorem \ref{P1} by conformal change of the metrics and also the coupling method. In Section 3.3, we applied Theorems \ref{main-th-1} and \ref{P1} to the forward Ricci flow with umbilic boundary.

\subsection{Proof of Theorem \ref{main-th-1} (Convex flow)}
\hspace{0.5cm} We first
 introduce the coupling method for the reflecting $L_t$-diffusion processes.
 Let ${\rm Cut}_t(x)$ be the set of the $g_t$ cut-locus of $x$
on $M$. Then, the $g_t$ cut-locus ${\rm Cut}_t$ and the
space time cut-locus ${\rm Cut_{ST}}$ are respectively defined by
\begin{align*}
{\rm Cut}_t&=\{(x,y)\in M\times M\ |\ y\in {\rm Cut}_{t}(x)\};\\
{\rm Cut_{ST}}&=\{(t,x,y)\in [0,T_c)\times M\times M\ |\ (x,y)\in {\rm
Cut}_t\}.
\end{align*}
Set $D(M)=\{(x,x)|x\in M\}$.
For $(x,y)\notin {\rm Cut}_t$, let $\{J_i^t\}_{i=1}^{d-1}$
be Jacobi fields along the minimal geodesic $\gamma$ from $x$ to $y$
with respect to the metric $g_t$ such that at points $x$ and $y$, $\{J_i^t,
\dot{\gamma}: 1\leq i\leq d-1\}$ is an orthonormal basis. Let
\begin{align}
I_{Z}(t,x,y)=&\sum_{i=1}^{d-1}\int_{\gamma}\l(\l<\nabla^t_{\dot{\gamma}}J_i^t, \nabla^t_{\dot{\gamma}}J_i^t\r>_t-\l<R_t(J_i^t,\dot{\gamma})\dot{\gamma}, J_i^t\r>_t\r)(\gamma(s))\vd s+\frac{1}{2}\int_{\gamma}\partial_tg_t(\dot{\gamma}(s),\dot{\gamma}(s))\vd
s\nonumber\\
&+Z_t\rho_t(\cdot, y)(x)+Z_t\rho_t(x, \cdot)(y),\end{align}
where $R_t$ is the sectional curvature tensor with respect to the metric $g_t$.
Moreover, let $P_{x,y}^t: T_xM\rightarrow T_yM$ be the $g_t$-parallel
transform along the geodesic $\gamma$, and let
$$M^t_{x,y}: T_xM\rightarrow T_yM;\ v\mapsto P_{x,y}^tv-2\l<v,\dot{\gamma}\r>_t(x)\dot{\gamma}(y) $$
be the mirror reflection associated with the metric $g_t$. Then $P_{x,y}^t$ and $M_{x,y}^t$
are smooth outside ${\rm Cut}_t\cup D(M)$. For convenience,
 set $P_{x,x}^t$ and $M_{x,x}^t$ be the identity for $x\in
M$.
\begin{lemma}\label{coupling}
Let $x\neq y$ and $0<T<T_c$ be fixed. Let $U:[0,T)\times M\times
M\rightarrow TM$ be $C^1$-smooth in $({\rm
Cut_{ST}}\cup[0,T]\times D(M))^{c}$ such that $U(t,x_1,x_2)\in T_{x_2}M$ for $(t,x_1,x_2)\in [0,T]\times M\times
M$.
\begin{enumerate}
  \item[$(a)$] There exist two Brownian motions $B_t$ and $\tilde{B}_t$
 on the probability space $(\Omega, \{\mathscr{F}_t\}_{t\geq 0},
 \mathbb{P})$ such that
 $${\bf 1}_{\{(X_t, \tilde{X}_t)\notin{\rm Cut}_t\}}\vd \tilde{B}_t={\bf 1}_{\{(X_t, \tilde{X}_t)\notin{\rm Cut}_t\}}\tilde{u}_t^{-1}P^t_{X_t,\tilde{X}_t}u_t\vd B_t$$
holds, where $X_t$ with lift $u_t$ and local time $l_t$, and $\tilde{X}_t$ with lift
$\tilde{u}_t$ and and local time $\tilde{l}_t$ solve the equation
\begin{align}\label{3e4}
\begin{cases}
\vd X_t=\sqrt{2} u_t\circ\vd B_t+Z_t(X_t)\vd t+N_t(X_t)\vd l_t, &
X_0=x,\vspace{0.3cm}
\\
\vd \tilde{X}_t= \sqrt{2}\tilde{u}_t\circ \vd
\tilde{B}_t
+\l\{Z_t(\tilde{X}_t)+U(t,X_t,\tilde{X}_t){\bf 1}_{\{X_t\neq
\tilde{X}_t\}}\r\}\vd t+N_t(\tilde{X}_t)\vd \tilde{l}_t, &
\tilde{X}_0=y.
\end{cases}
\end{align}
Moreover,
for any $J\in C([0,T]\times M\times M)$ such that $J\geq I_Z$ on $({\rm Cut}_{\rm ST}\cup[0,T]\times D(M))^c$,
\begin{align}\label{3e5}\vd \rho_t(X_t,
\tilde{X}_t)&\leq
\bigg\{J(t,X_t,\tilde{X}_t)+\l<U(t,X_t,\tilde{X}_t),\nabla^t\rho_t(X_t,\cdot)(\tilde{X}_t)\r>_t{\bf
1}_{\{X_t\neq \tilde{X}_t\}}\bigg\}\vd t\end{align}
holds up to the coupling time $T_0:=\inf\{t\in [0,T]: X_t=\tilde{X}_t\}$, $\inf \varnothing =T$.
  \item[$(b)$] The first assertion in (a) holds with
  $M^t_{X_t,\tilde{X}_t}$ in place of
  $P_{X_t,\tilde{X}_t}^t$. In this case, for any $J\in C([0,T]\times M\times M)$ such that $J\geq I_Z$ on $({\rm Cut}_{\rm ST}\cup[0,T]\times D(M))^c$,
\begin{align}\label{3e6}
\vd \rho_t(X_t, \tilde{X}_t)\leq 2\sqrt{2}\vd
b_t+\bigg\{& J(t,X_t,\tilde{X}_t)+\l<U(t,X_t,\tilde{X}_t),\nabla^t\rho_t(X_t,\cdot)(\tilde{X}_t)\r>_t{\bf
1}_{\{X_t\neq \tilde{X}_t\}}\bigg\}\vd t \end{align}
holds up to the coupling time $T_0$, where $b_t$ is a
one-dimensional Brownian motion.
\end{enumerate}
\end{lemma}
\begin{proof}
We follow the argument in the proof of \cite[Theorem 3.4]{cheng}, but construct the coupling processes $(X_t,X_t^{n,\varepsilon})$ with reflecting boundary. Then we should add more argument for one more term caused by the local time on the boundary. More precisely,
when applying the It\^{o} formula to the radial process $\rho_t(X_t,X_t^{n,\varepsilon})$, we have the additional term
$${\bf 1}_{(M\times M)\setminus {\rm Cut}_t}(X_t,X_t^{n,\varepsilon})(N_t(X_t)+N_t(X_t^{n,\varepsilon}))\rho_t(X_t, X_t^{n,\varepsilon})\vd I_t^{n,\varepsilon},$$
where $I_t^{n,\varepsilon}$ is an increasing process which increasing only when $(X_t,Y_t^{n,\varepsilon})\in \partial (M\times M)\setminus {\rm Cut}_t$.
Thus to pass through the proof for the present case, we only need to show that  $N_t\rho_t(x,\cdot)(y)\leq 0$
for any $y\in \partial M, x\in M, (x,y)\in (M\times M)\setminus{\rm Cut}_t$ and $t\in [0,T_c)$. This is ensured by the convexity of the geometic flow. Therefore, the proof of \cite[Theorem 3.4]{cheng} also  works for the reflecting
$L_t$-diffusion case.
\end{proof}
By using the parallel coupling process, we complete the proof of Theorem \ref{main-th-1}.
\begin{proof}[Proof of Theorem \ref{main-th-1}]
First, by Theorem \ref{4t1}\,(i)(ii) and Theorem \ref{II-form} (i.e. the characterizations for $\mathcal{R}_t^Z$ and ${\rm II}_t$), each of \eqref{add-1}
and \eqref{add-2} implies \eqref{CV1} directly.

Now, suppose the curvature condition \eqref{CV1} holds. We prove \eqref{add-1}.  We first observe from the index lemma that
\begin{align}\label{add-5}
I_Z(t,x,y)\leq& \frac{1}{2}\int_0^{\rho_t(x,y)}\partial_tg_t(\dot{\gamma},\dot{\gamma})(\gamma(s))\vd s-\int_0^{\rho_t(x,y)}{\rm Ric}_t^Z(\dot{\gamma},\dot{\gamma})(\gamma(s))\vd s\nonumber\\
=&-\int_0^{\rho_t(x,y)}\mathcal{R}_t^Z(\dot{\gamma},\dot{\gamma})(\gamma(s))\vd s\notag\\
\leq &-K(t)\rho_t(x,y),
\end{align}
where $\gamma:[0,\rho_t(x,y)]\rightarrow M$ is the minimal geodesic from $x$ and $y$
associated with $g_t$. Now let $U=0$ and $(X_t,\tilde{X}_t)$ be the coupling by parallel displacement for $X_0=x, \tilde{X}_0=y$. By Lemma \ref{coupling} for $U=0$, \eqref{add-5} and $N_t\rho_t(x,\cdot)(y)\leq 0$ for $y\in \partial M$ and $x\in M$, we obtain
\begin{align*}
\vd \rho_t(X_t,\tilde{X}_t)
\leq&-K(t)\rho_t(X_t,\tilde{X}_t)\vd t.
\end{align*}
Thus, $\rho_t(X_t,\tilde{X}_t)\leq e^{-\int_s^tK(u)\vd u}\rho_s(X_s,\tilde{X}_s)$, which together with the dominated convergence theorem, we have
\begin{align*}
|\nabla^sP_{s,t}f(x)|_s\leq& \limsup_{y\rightarrow x}\frac{\mathbb{E}(|f(X_t)-f(\tilde{X}_t)|\,\big|\,(X_s,\tilde{X}_s)=(x,y))}{\rho_s(x,y)}\\
\leq & e^{-\int_s^tK(u)\vd u}\limsup_{y\rightarrow x}\mathbb{E}\l(\frac{|f(X_t)-f(\tilde{X}_t)|}{\rho_t(X_t,\tilde{X}_t)}\,\bigg|\,(X_s,\tilde{X}_s)=(x,y)\r)\\
\leq & e^{-\int_s^tK(u)\vd u} P_{s,t}|\nabla^tf|_t(x).
\end{align*}

Finally, we prove the Harnack inequality. Let $f\in C_0^{\infty}(M)$ be such that $f\geq 1$ and $f$ is constant outside a compact set. Given $x \neq y$ and $t>0$, let $\gamma:
[0,t]\rightarrow M$ be the $g_0$-geodesic from $x$ to $y$ with
length $\rho_0(x,y)$. Let $\nu_s=\frac{\vd \gamma_s}{\vd s}$. Then
we have $|\nu_s|_0=\rho_0(x,y)/t$. Let
$$h(s)=\frac{t\int_0^se^{2\int_0^rK(u)\vd u}\vd r}{\int_0^te^{2\int_0^rK(u)\vd u}\vd r}.$$
Then $h(0)=0$ and $h(t)=t$. Set $y_s=\gamma_{h(s)}$ and
$$\varphi(s)=\log P_{0,s}(P_{s,t}f)^p(y_s), \ s\in [0,t].$$
To get the derivative of $\varphi$, by using the It\^{o} formula,   we first have
\begin{align*}
{\rm d} (P_{s,t}f)^p(X_s)=\vd M_s +p(p-1)(P_{s,t}f)^{p-2}(X_s)|\nabla^sP_{s,t}f|_s^2(X_s)\vd s,\quad 0<s<\zeta_n,
\end{align*}
where $M_s$ is a local martingale. As explained above, $|\nabla^sP_{s,t}f|_s\leq e^{-\int_s^tK(r)\vd
r}P_{s,t}|\nabla^{t} f|_t$ and $(P_{s,t}f)^{p-2}$ is bounded, it is easy to deduce that
$$P_{0,s} (P_{s,t}f)^p(x)-(P_{0,t}f)^p(x)=p(p-1)\int_0^sP_{0,r}[(P_{r,t}f)^{p-2}|\nabla^rP_{r,t}f|_r^2](x)\vd r.$$
That is
$$\frac{\vd P_{0,s} (P_{s,t}f)^p(x)}{\vd s}=p(p-1)P_{0,s}[(P_{s,t}f)^{p-2}|\nabla^sP_{s,t}f|_s^2](x),$$
 which implies
that for any $s\in [0,t]$,
\begin{align*}
\frac{\vd \varphi(s)}{\vd
s}=&\frac{1}{P_{0,s}(P_{s,t}f)^p}\Big\{P_{0,s}\Big(p(p-1)(P_{s,t}f)^p|\nabla^s\log
P_{s,t}f|^2_s +h'(s)\l<\nabla^0P_{0,s}(P_{s,t}f)^{p},\nu_s\r>_0\Big\}\\
\geq&\frac{p}{P_{0,s}(P_{s,t}f)^p}P_{0,s}\Big\{(P_{s,t}f)^p\Big((p-1)|\nabla^s\log
P_{s,t}f|^2_s\\
&\hspace{4.5cm}-\frac{\rho_0(x,y)}{t}h'(s)e^{-\int_0^s K(u)\vd u}|\nabla^s\log
P_{s,t}f|_s\Big) \Big \} \\
\geq& \frac{-p\rho_0^2(x,y) h'(s)^2e^{-2\int_0^sK(u)\vd u}}{4(p-1)t^2}.
\end{align*}
Since $h'(s)=\frac{te^{\int_0^s2K(u)\vd
u}}{\int_0^te^{\int_0^r2K(u)\vd u}\vd r} $, we arrive at
$$\frac{\vd \varphi(s)}{\vd
s}\geq \frac{-p\rho_0(x,y)^2e^{\int_0^s2K(u)\vd u}
}{4(p-1)(\int_0^te^{2\int_0^rK(u)\vd u}\vd r)^2},\ \quad s\in [0,t].$$ By
integrating over $s$ from 0 and $t$, we complete the proof of \eqref{add-2} for $s=0$.
\end{proof}
\begin{remark}\label{rem-Harnack}
We point out that by letting  $\phi_t\equiv 1$ in the proof of  Theorem \ref{P1}\,(ii), the Harnack inequality can be deduced by using coupling method directly.
\end{remark}


\subsection{Proof of Theorem \ref{P1} (Non-convex flow)}
\hspace{0.5cm} The proof of Theorem \ref{P1} is divided into two parts. First, we
prove that the curvature condition {\bf (H1)} implies the gradient inequality.

\begin{proof}[{Proof of Theorem \ref{P1}}] (Gradient inequality)
We also  use coupling method to prove the gradient inequality.
To this end, we need make a conformal change of the geometric flow $g_t$ first. Let $\phi\in \mathscr{D}$.
As announced,  the new flow $\tilde{g}_t:=\phi_t^{-2}g_t$ is convex flow.
 Let $\tilde{\Delta}_t$ and $\tilde{\nabla}^t$ be the Laplacian and gradient operator associated with the metric $\tilde{g}_t$. According to \cite[(2.2)]{TW},
 \begin{align}\label{eq-add-3}
 L_t=\phi_t^{-2}(\tilde{\Delta}_t+\tilde{Z}_t)\  \ \mbox{and} \  \ \tilde{Z}_t=\phi_t^2Z_t+\frac{d-2}{2}\nabla^t\phi_t^2.
 \end{align}

To simplify the discussion, we consider the process generated by $L'_t=\varphi_t^2(\Delta_t+Z_t)$ on the manifold carrying convex flow $\{g_t\}_{t\in [0,T_c)}$ first, where $\varphi\in C^{1,\infty}([0,T_c)\times M)$ and $0<\varphi\leq 1$.
Moreover, suppose
$${\rm Ric}^Z_t\geq k_1(t)\quad\mbox{and}\quad \partial_tg_t\leq k_2(t)$$
for some functions $k_1,k_2\in C([0,T_c))$.
Let $X_t$ solve
\begin{align}\label{add-eq-2}
\vd_IX_t=\sqrt{2}\varphi_t(X_t)u_t\vd B_t+\varphi_t^2(X_t)Z_t(X_t)\vd t+N_t(X_t)\vd l_t,\quad X_0=x.
\end{align}
Let $Y_t$ solve
\begin{align}\label{add-eq-3}
\vd_IY_t=\sqrt{2}\varphi_t(Y_t)P_{X_t,Y_t}^tu_t\vd B_t+\varphi_t^2(X_t)Z_t(Y_t)\vd t+N_t(Y_t)\vd \tilde{l}_t,\quad Y_0=y.
\end{align}
As the boundary $(\partial M, g_t)$ is convex for all $t\in [0,T_c)$, by the It\^{o} formula, we have
\begin{align*}
\vd \rho_t(X_t,Y_t)\leq &\sqrt{2}(\varphi_t(X_t)-\varphi_t(Y_t))\vd b_t+
\bigg\{\sum_{i=1}^{n}(U_i^t)^2\rho_t(X_t,Y_t)+\partial_t\rho_t(X_t,Y_t)\\
&+\l<\varphi_t^2Z_t(Y_t),\nabla^t\rho_t(X_t,\cdot)(Y_t)\r>_t+\l<\varphi_t^2Z_t(X_t),\nabla^t\rho_t(\cdot, Y_t)(X_t)\r>_t\bigg\}\vd t,
\end{align*}
where $b_t$ is a one-dimensional Brownian motion, $\{U^t_i\}_{i=1}^{n}$ are vector fields on $M\times M$ such
that ${\nabla}^t U^t_i(X_t,Y_t)=0$ and
$$U^t_i(X_t,Y_t)=\varphi_t(X_t)V^t_i+\varphi_t(Y_t){P}^t_{X_t,Y_t}V^t_i,\ \ 1\leq i\leq n$$
for $\{V^t_i\}_{i=1}^{n}$ a ${g}_t$-orthonormal basis of $T_{X_t}M$. Let ${\rho}_t={\rho}_t(X_t,Y_t)$.
Define
$${J^t_i}(s)=\l(\frac{s}{{\rho}_t}\varphi_t(Y_t)+\frac{{\rho}_t-s}{{\rho}_t}\varphi_t(X_t)\r){P}^t_{\gamma(0),\gamma(s)}V^t_i,\ \ 1\leq i\leq n,$$
where ${J^t_i}(0)=\varphi_t(X_t)V^t_i$ and
${J^t_i}({\rho}_t)=\varphi_t(Y_t){P}^t_{X_t,Y_t}V^t_i$. Note that
${P}^t_{\gamma(0),\gamma(s)}V^t_i$ are parallel vector fields along
$\gamma_t$,
\begin{align}\label{add-eq-7}
&\sum_{i=1}^{d}(U_i^t)^2\rho_t(X_t,Y_t)\notag\\
&\leq \sum_{i=1}^{d}\int_0^{\rho_t}\l\{|\nabla^t_{\dot{\gamma}}J_i^t|_t^2-\l<R_t(\dot{\gamma},J_i^t)J_i^t, \dot{\gamma}\r>_t\r\}(\gamma(s))\vd s\nonumber\\
&\leq  d\|\nabla^t\varphi_t\|^2_{\infty}\rho_t-\frac{1}{\rho_t^2}\int_0^{\rho_t}\{s\varphi_t(Y_t)+(\rho_t-s)\varphi_t(X_t)\}^2{\rm Ric}_t(\dot{\gamma}(s),\dot{\gamma}(s))\vd s.
\end{align}
On the other hand,
\begin{align}\label{add-eq-4}
\varphi_t^2&(X_t)\l<Z_t(X_t), \nabla^t\rho_t(\cdot, Y_t)(X_t)\r>_t+\varphi_t^2(Y_t)\l<Z_t(Y_t), \nabla^t\rho_t(X_t,\cdot)(Y_t)\r>_t\nonumber\\
&=
\frac{1}{\rho_t^2}\int_0^{\rho_t}\frac{\vd }{\vd s}\{(s\varphi_t(Y_t)+(\rho_t-s)\varphi_t(X_t))^2\l<Z_t({\gamma(s)}),\dot{\gamma}(s)\r>_t\}\vd s\nonumber\\
&\leq \frac{1}{\rho_t^2}\int_0^{\rho_t}(s\varphi_t(Y_t)+(\rho_t-s)\varphi_t(X_t))^2\l<(\nabla^t_{\dot{\gamma}}Z_t)\circ \gamma, \dot{\gamma}\r>_t(\gamma(s))\vd s+2\|Z_t\|\|\nabla^t\varphi_t\|_{\infty}\rho_t.
\end{align}
Moreover,
$$\partial_t \rho_t(X_t,Y_t)=\frac{1}{2}\int_0^{\rho_t}\partial_tg_t(\dot{\gamma}(s),\dot{\gamma}(s))\vd s\leq \frac{1}{2}k_2(t)\rho_t.$$
Combining this with \eqref{add-eq-7} and \eqref{add-eq-4},  we have
\begin{align}\label{add-eq-5}
\vd \rho_t(X_t,Y_t)\leq &\sqrt{2}(\varphi_t(X_t)-\varphi_t(Y_t))\vd b_t- k_1(t)\l\{\frac{1}{\rho_t^2}\int_0^{\rho_t}[s\varphi_t(Y_t)+(\rho_t-s)\varphi_t(X_t)]^2\vd s\r\}\vd t\notag \\
&+
\l\{d\|\nabla^t\varphi_t\|_{\infty}^2\rho_t+2\|Z_t\|_{\infty}\|\nabla^t\varphi_t\|_{\infty}\rho_t+\frac{1}{2}k_2(t)\rho_t\r\}\vd t
&\nonumber\\
\leq & \sqrt{2}(\varphi_t(X_t)-\varphi_t(Y_t))\vd b_t+K_{\varphi}(t)\rho_t(X_t,Y_t)\vd t,
\end{align}
where
\begin{equation}\label{K-phi}
K_{\varphi}(t):=d\|\nabla^t\varphi_t\|_{\infty}^2+2\|Z_t\|_{\infty}\|\nabla ^t\varphi_t\|_{\infty}+k^{-}_1(t)+\frac{1}{2}k_2(t).
\end{equation}

Now we return to the diffusion processes generated by $L_t=\phi_t^{-2}(\tilde{\Delta}_t+\tilde{Z}_t)$ (see \eqref{eq-add-3}). Let $\varphi_t=\phi^{-1}_t$ and $\widetilde{{\rm Ric}}_t$ be the new Ricci curvature tensor with respect to the metric $\tilde{g}_t$.
By \cite[Theorem 1.129]{Bess} and \cite[(3.2)]{FWW}, for any $X\in TM$ such that ${\tilde{g}_t}(X,X)=1$, i.e. $|X|_t=\phi_t$, we have
$$\widetilde{{\rm Ric}}_t(X,X)={\rm Ric}_t(X,X)+(d-2)\phi_t^{-1}{\rm Hess}_{\phi_t}^t(X,X)+\frac{1}{2}\nabla^t\phi_t^2-(d-2)|\nabla^t\phi_t|^2_t,$$
and
\begin{align*}
\tilde{g}_t(\tilde{\nabla}^t_X\tilde{Z}_t,X)=&\l<\nabla^t_XZ_t,X\r>_t+2\l<\nabla^t\log\phi_t,X\r>_t\l<Z_t,X\r>_t\\
&+(d-2)\phi_t^{-1}{\rm Hess}^t_{\phi_t}(X,X)+(d-2)\l<X,\nabla^t\log\phi_t\r>^2_t\\
&-\phi_t\l<Z_t, \nabla^t\phi_t\r>_t-(d-2)|\nabla^t\phi_t|_t^2.
\end{align*}
Therefore, noting that $|X|_t=\phi_t$, we have
\begin{align*}
\widetilde{{\rm Ric}}_t^{\tilde{Z}}(X,X)&:=\widetilde{{\rm Ric}}_t(X,X)-\tilde{g}_t(\tilde{\nabla}^t_X\tilde{Z}_t,X)\\
&={\rm Ric}_t^Z(X,X)+\frac{1}{2}L_t\phi_t^2-2\l<\nabla^t\log\phi_t, X\r>_t\l<Z_t,X\r>_t -(d-2)\l<X,\nabla^t\log\phi_t\r>^2_t\\
&\geq K_1(t)\phi_t^2+\frac{1}{2}L_t\phi_t^2-|\nabla^t\phi_t^2|_t\cdot|Z_t|_t-(d-2)|\nabla^t\phi_t|^2_t\\
&\geq K_{\phi,1}(t),
\end{align*}
and
\begin{align*}
\partial_t\tilde{g}_t(X,X)&=\partial_t[\phi_t^{-2}g_t(X,X)]
=(\partial_t\phi_t^{-2})\phi_t^2+\phi_t^{-2}\partial_tg_t(X,X)\\
&\leq -2\partial _t\log\phi_t+K_{2}(t)\leq K_{\phi,2}(t).
\end{align*}
Moreover, let $|\cdot|_t'$ be the norm with respect to the metric $\tilde{g}_t$.
Then,
$$|\tilde{\nabla}^t\phi_t^{-1}|'_t\leq |\nabla^t \phi_t|_t\quad  \mbox{and} \quad   |\tilde{Z}_t|_t'\leq \l|\phi_tZ_t+\frac{d-2}{2}\nabla^t\phi_t^2\r |_t,$$
which, together with \eqref{add-eq-5}, yields
\begin{align*}
\vd \tilde{\rho}_{t}(X_t,Y_t)\leq \sqrt{2}(\phi_t^{-1}(X_t)-\phi_t^{-1}(Y_t)) \vd b_t+K_{\phi}(t)\tilde{\rho}_t(X_t,Y_t)\vd t,
\end{align*}
where $$K_{\phi}(t):=K_{\phi,1}^-(t)+\frac{1}{2}K_{\phi,2}(t)+2\|\phi_tZ_t+(d-2)\nabla^t\phi_t\|_{\infty}\|\nabla^t\phi_t\|_{\infty}+d\|\nabla^t\phi_t\|_{\infty}^2.$$
 In addition, $\phi_t\geq 1$, we therefore have $\tilde{\rho}_t\leq\rho_t\leq \|\phi_t\|_{\infty}\tilde{\rho}_t$, which implies
$$\mathbb{E}^{(x,y)}\rho_t(X_t,Y_t)\leq \|\phi_t\|_{\infty}e^{\int_s^tK_{\phi}(r)\vd r}{\tilde{\rho}_s}(x,y)\leq \|\phi_t\|_{\infty}e^{\int_s^tK_{\phi}(r)\vd r}{\rho}_s(x,y),\ s \leq t<T_c.$$
Then,
\begin{align*}
|\nabla^sP_{s,t}f|(x)=\lim_{y\rightarrow x}\bigg|\frac{P_{s,t}f(x)-P_{s,t}f(y)}{\rho_s(x,y)}\bigg|&=\bigg|\mathbb{E}^{(x,y)}\l[\frac{f(X_t)-f(Y_t)}{\rho_t(X_t,Y_t)}\frac{\rho_t(X_t,Y_t)}{\rho_s(x,y)}\r]\bigg|\\
&\leq \|\nabla^tf\|_{\infty}\|\phi_t\|_{\infty}e^{\int_s^tK_{\phi}(r)\vd r},
\end{align*}
which leads to complete the proof directly.
\end{proof}

The following result is derived from Theorem \ref{P1}\,(i) and Theorem \ref{Bis}.
 \begin{corollary}\label{c2}
Assume  ${\bf (H1)}$ holds.  If there exists $\phi\in \mathscr{D}$ such that $K_{\phi}(t)<\infty$ for all $ 0\leq t< T_c$, then
 for $p\in [1,\infty)$ and $f\in C^1(M)$ such that $f$ is constant outside a compact set,
 $$|\nabla^sP_{s,t}f|_s\leq \|\phi_t\|_{\infty}(P_{s,t}|\nabla^tf|_t^{p/(p-1)})^{(p-1)/p}e^{-\int_s^tK_{\phi}^{(p)}(r)\vd r},\ 0\leq s\leq t< T_c$$
 holds for $K_{\phi}^{(p)}(r):=\inf\{\phi_r^{-1}(L_r+\partial_r)\phi_r-(p+1)|\nabla^{r}\log\phi_r|^2_r\}.$
 Moreover, for $f\in \mathscr{B}_b(M)$,
\begin{align}\label{eq1}
|\nabla^sP_{s,t}f|^2_s\leq\frac{1}{2}\l[\int_s^t\|\phi_u\|_{\infty}^{-2}e^{2\int_s^uK_{\phi}^{(2)}(r)\vd r}\vd u\r]^{-1}P_{s,t}f^2,\ \ 0\leq s<t< T_c.
\end{align}
 \end{corollary}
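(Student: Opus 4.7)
The plan is to reduce to the convex-boundary case via the conformal change $\tilde g_t:=\phi_t^{-2}g_t$ (for which the boundary is $\tilde g_t$-convex, as already exploited in Proposition \ref{P1}), and then apply both versions of the derivative formula from Theorem \ref{Bis}: the parallel-transport form for the first ($L^{p/(p-1)}$-type) inequality via H\"older, and the martingale-representation form for the second (via Cauchy-Schwarz plus an $h$-optimization).

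For the first inequality I would first redo the Bakry--Emery/conformal-change computation from the proof of Proposition \ref{P1}, but this time completing the square with the H\"older exponent $p$ in mind so that the $(d-2)|\nabla^t\phi_t|^2_t$ error term is replaced by a $(p+1)|\nabla^r\log\phi_r|^2_r$ term. This produces the effective curvature lower bound $\widetilde{\mathcal R}^{\tilde Z}_t\geq (K_1-\tfrac{1}{2}K_2+K_\phi^{(p)})(t)$ along $\tilde g_t$-unit vectors. Theorem \ref{Bis} applied under $\tilde g_t$ (with $\sigma\equiv 0$) then gives
$$\tilde u_s^{-1}\tilde\nabla^sP_{s,t}f(x)=\mathbb{E}\l[\tilde Q_{s,t}^*\tilde u_t^{-1}\tilde\nabla^tf(X_t)\mid X_s=x\r],\qquad \|\tilde Q_{s,t}\|\leq e^{-\int_s^t(K_1-\frac{1}{2}K_2+K_\phi^{(p)})(u)\vd u}.$$
Taking $\tilde g_t$-norms, applying H\"older with conjugate exponents $p$ and $p/(p-1)$, and reverting to $g_t$ via $|\tilde\nabla^tf|_{\tilde t}=\phi_t|\nabla^tf|_t$ together with $\phi_s\geq 1$ will yield the first inequality; the outer factor $\|\phi_t\|_\infty$ arises from the bound $(P_{s,t}(\phi_t|\nabla^tf|_t)^{p/(p-1)})^{(p-1)/p}\leq \|\phi_t\|_\infty(P_{s,t}|\nabla^tf|_t^{p/(p-1)})^{(p-1)/p}$.

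For the second inequality I would use the martingale-representation form of Theorem \ref{Bis} (the second equality in \eqref{2Bis}) under $\tilde g_t$: for $h\in C^1([s,t])$ with $h(s)=0$ and $h(t)=1$, Cauchy--Schwarz applied to the expectation, combined with the $p=2$ bound on $\|\tilde Q\|^2$ and the $\phi^{-2}$ rescaling from the It\^o representation $\vd_IX_r=\phi_r^{-1}u_r\vd B_r+\cdots$ from Proposition \ref{P1} (which rescales the quadratic variation of the driving noise under $\tilde g_t$), produces an $h$-dependent upper bound of the form
$$\tfrac{1}{2}P_{s,t}f^2\int_s^t h'(r)^2\|\phi_r\|_\infty^{-2}e^{-2\int_s^r(K_1-\frac{1}{2}K_2+K_\phi^{(2)})(u)\vd u}\vd r.$$
Minimizing over admissible $h$ via the elementary Cauchy--Schwarz identity $1=\l(\int h'\r)^2\leq \int h'^2\alpha\cdot\int\alpha^{-1}$ with $\alpha(r)=\|\phi_r\|_\infty^{-2}e^{-2\int_s^r(\cdots)\vd u}$ then yields the stated reciprocal-integral bound (the minimizer being $h'(r)\propto 1/\alpha(r)$).

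The main obstacle is the Bakry--Emery computation producing the precise expression for $K_\phi^{(p)}$: the coefficient $p+1$ must emerge from an optimal Young's-inequality split of the cross terms involving $\nabla\phi$ and $\nabla P_{r,t}f$ when handling $|\nabla^r P_{r,t}f|_r^{p/(p-1)}$ along the semigroup, and careful separation of the two distinct roles of $\phi$ (via the conformal gradient identity $|\tilde\nabla f|_{\tilde t}=\phi_t|\nabla f|_t$ on the one hand, and via the time-change of the driving noise on the other) is needed to recover the outer $\|\phi_t\|_\infty$ in the first inequality versus the inner $\|\phi_u\|_\infty^{-2}$ weight in the second.
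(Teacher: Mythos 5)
Your overall strategy---conformal change $\tilde g_t=\phi_t^{-2}g_t$ to make the boundary convex, then Theorem \ref{Bis} with $\sigma\equiv 0$ under $\tilde g_t$---has a genuine gap, and it is not the route the paper takes. Under the conformal change the generator becomes $L_t=\phi_t^{-2}(\tilde\Delta_t+\tilde Z_t)$, i.e.\ the original semigroup $P_{s,t}$ is generated, with respect to $\tilde g_t$, by an operator with a nonconstant diffusion coefficient $\phi_t^{-2}$; Theorem \ref{Bis} is proved only for generators of the form $\Delta_t+Z_t$, and the paper has no derivative formula with multiplicative noise, so you cannot invoke it ``under $\tilde g_t$'' for $P_{s,t}$. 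Moreover, the claimed effective bound $\widetilde{\mathcal R}^{\tilde Z}_t\geq(K_1-\tfrac12K_2+K_\phi^{(p)})(t)$ cannot come from any Bakry--\'Emery/conformal computation: the curvature of $(\tilde g_t,\tilde Z_t)$ does not depend on $p$, while $K_\phi^{(p)}$ does, and $K_\phi^{(p)}$ involves only $\phi^{-1}L\phi$, $\partial_t\log\phi$ and $|\nabla\log\phi|^2$, with no Hessian or Ricci content, so no ``completing the square with the H\"older exponent in mind'' produces it as a curvature lower bound. In the paper the $(p+1)|\nabla\log\phi|^2$ term has a completely different origin: one stays on $(M,g_t)$, applies Theorem \ref{Bis} with $K=K_1-\tfrac12K_2$ and the \emph{negative} boundary bound $\sigma=-N_t\log\phi_t$ (legitimate because $\phi\in\mathscr D$ means $\mathbb I_t\geq -N_t\log\phi_t$), so that $\|Q_t\|\leq\exp[-\int(K_1-\tfrac12K_2)+\int N_s\log\phi_s(X_s)\,\vd l_s]$, and then controls the exponential local-time moment by an It\^o supermartingale argument for $\phi_t^{-p}(X_t)$, which is exactly where $K_\phi^{(p)}$ and the factor $\|\phi_t\|_\infty^p$ enter (inequality \eqref{eq0}); Proposition \ref{P1} serves only to verify the a priori boundedness of $|\nabla^\cdot P_{\cdot,t}f|_\cdot$ required by Theorem \ref{Bis}, not to transfer the whole problem to $\tilde g_t$.

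Your second part also carries a concrete error that signals the same misconception: in the $h$-optimization the correct weight is the bound on $\mathbb E\|Q_r\|^2$, namely $\|\phi_r\|_\infty^{2}\,e^{-2\int_s^r(K_1-\frac12K_2+K_\phi^{(2)})}$, and minimizing $\int h'^2\alpha$ with $h'\propto\alpha^{-1}$ then gives precisely $\bigl(\int_s^t\|\phi_u\|_\infty^{-2}e^{2\int_s^u(\cdots)}\vd u\bigr)^{-1}$ as in \eqref{eq1}. Your weight $\|\phi_r\|_\infty^{-2}e^{-2\int(\cdots)}$ (attributed to a ``$\phi^{-2}$ rescaling of the noise'') would instead yield $\bigl(\int_s^t\|\phi_u\|_\infty^{2}e^{2\int_s^u(\cdots)}\vd u\bigr)^{-1}$, a stronger bound than the statement that your argument does not justify. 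To repair the proof you should abandon the conformal reduction for this corollary, establish \eqref{eq0} via the local martingale $\phi_t^{-p}(X_t)\exp[p\int_0^tK_\phi^{(p)}+p\int_0^tN\log\phi\,\vd l]$ and Fatou, and then feed it into both forms of \eqref{2Bis} with $\sigma=-N\log\phi$, exactly as the paper does.
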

\begin{proof}
The proof is due to \cite[Corollary 3.2.8]{Wbook2}.  We include it in Appendix for readers' convenience.
\end{proof}

We apply the coupling method to the proof of the Harnack inequality (See Theorem \ref{P1}\,(ii)). In \cite{WAP}, F.-Y.Wang constructed a proper coupling process to get the Harnack inequalities on manifolds with fixed metric. Here, we should modify the idea to our setting, where the main  difficulty is to construct
the coupling process such that it does not miss the information from the Ricci curvature.

\begin{proof}[Proof of Theorem \ref{P1}] (Part I: Harnack inequality)
Without loss of generality, we assume $s=0$ and $t=T$.
Now, let $x,y \in M$ and $T\in (0,T_c)$ be fixed.  To simplify the discussion, we also consider the process generated by ${L}'_t=\varphi_t^2(\Delta_t+Z_t)$ on a manifold carrying convex flow and suppose
$${\rm Ric}^Z_t\geq k_1(t)\quad \mbox{and}\quad \partial_tg_t\leq k_2(t)$$
for some $k_1,k_2\in C([0,T_c))$.

Let $X_t$ solve (\ref{add-eq-2}) with $X_0=x$. For some strictly
positive function $\xi_t\in C([0,T))$, let $Y_t$ solve
\begin{align}\label{SDE2}
\vd _I Y_t=\sqrt{2}\varphi_t(Y_t)P_{X_t, Y_t}^tu_t\vd B_t+\varphi^2_tZ_t(Y_t)\vd
t-\frac{\varphi_t(Y_t)\rho_t(X_t,Y_t)}{\varphi_t(X_t)\xi_t}\nabla^t\rho_t(X_t,\cdot)(Y_t)&\vd
t+N_t(Y_t)\vd \tilde{l}_t,\nonumber\\
&Y_0=y,
\end{align}
 where $\tilde{l}_t$ is the local time of $Y_t$ on
$\partial M$. In the spirit of Lemma \ref{coupling}, we may
assume that the cut-locus of $M$ is empty such that the parallel
displacement is smooth.
Let
\begin{align}\label{e8}
\vd \tilde{B}_t=\vd
B_t+\frac{\rho_t(X_t,Y_t)}{\sqrt{2}\xi_t\varphi_t(X_t)}u_t^{-1}\nabla^t\rho_t(\cdot,Y_t)(X_t)\vd
t,\ \ 0 \leq t<T.\end{align}
By a similar calculation as in \eqref{add-eq-5}, we have
\begin{align}\label{2eq6}
\vd \rho_t(X_t,Y_t)\leq &
\sqrt{2}(\varphi_t(X_t)-\varphi_t(Y_t))\l<\nabla^t\rho_t(\cdot,Y_t)(X_t),u_t\vd
\tilde{B}_t\r>_t+K_{\varphi}(t)\rho_t(X_t,Y_t)\vd
t\nonumber\\
&-\frac{\rho_t(X_t,Y_t)}{\xi_t}\vd t,\ \ \ 0\leq t<T,
\end{align}
which implies
\begin{align}\label{rho-1}
\vd\frac{\rho_t(X_t,Y_t)^2}{\xi_t}\leq &
\frac{2\sqrt{2}}{\xi_t}\rho_t(X_t,Y_t)(\varphi_t(X_t)-\varphi_t(Y_t))\l<\nabla^t\rho_t(\cdot,
Y_t)(X_t),u_t\vd
\tilde{B}_t\r>_t\nonumber\\
&-\frac{\rho_t(X_t,Y_t)^2}{\xi_t^2}\l[\xi_t'-(2\|\nabla^t\varphi_t\|^2_{\infty}+2K_{\varphi}(t))\xi_t+2\r]\vd
t,
\end{align}
where $K_{\varphi}$ is defined as in \eqref{K-phi}. Therefore,
by letting
$$\xi_t=(2-\theta)\int_t^Te^{-2\int_t^s({K}_{\varphi}(r)+\|\nabla^r\varphi_r\|_{\infty})\vd r}\vd s,\ \ t\in [0,T),\ \theta\in (0,2),$$
we know that $\xi_t, t\in [0,T)$ solves the following equation,
 $$2-(2\|\nabla^t\varphi_t\|^2_{\infty}+2K_{\varphi}(t))\xi_t+\xi'_t=\theta.$$
Combining this with \eqref{rho-1}, we obtain
\begin{align}\label{rho-estimate}
\vd\frac{\rho_t(X_t,Y_t)^2}{\xi_t}\leq &
\frac{2\sqrt{2}}{\xi_t}\rho_t(X_t,Y_t)(\varphi_t(X_t)-\varphi_t(Y_t))\l<\nabla^t\rho_t(\cdot,
Y_t)(X_t),u_t\vd
\tilde{B}_t\r>_t-\frac{\rho_t(X_t,Y_t)^2}{\xi_t^2}\theta\vd
t.
\end{align}
 Then, the following discussion is   similar  to that of \cite[Theorem 1.1]{WAP}, we omit it here.
\end{proof}

\subsection{Application to Ricci flow}
\hspace{0.5cm}
Now, we turn to consider the Ricci flow \eqref{Ricci-flow}. Assume that $\{g_t\}_{t\in [0,T]},\ T\in (0,\infty)$ is a complete solution to the equation \eqref{Ricci-flow}.  Let $\{P_{s,t}\}_{0\leq s\leq t\leq T}$  be the
Neumann diffusion semigroup generated by $\Delta_t$. Then, it is obvious to see that ${P_{s,T}f}$ is a solution to  the following heat equation
\begin{equation}\label{heat-equ-boundary}
\begin{cases}
  \frac{\partial}{\partial t}u(x,\cdot)(t)=-\Delta_tu(\cdot,t)(x),&\ \  (x,t)\in M\times [0,T];\\
  \\
  N_tu(\cdot,t)(x)=0,\ &\ \ x\in \partial M,\ t\in [0,T).
  \end{cases}
\end{equation}
    When $\lambda\geq 0$, the corresponding  gradient estimate and Harnack inequality can be derived from Theorem \ref{main-th-1} directly.
 \begin{theorem}\label{cor-2}
Suppose $\{g_{t}\}_{t\in [0,T]}$ is a complete solution to \eqref{Ricci-flow} with $\lambda\geq 0$. Then for $f\in C^1(M)$ such that $f$ is constant outside a compact set, $P_{s,T}f,\ s\in [0,T]$ is a solution to \eqref{heat-equ-boundary} and
\begin{equation}\label{Ricci-gd}
|\nabla ^sP_{s,T}f|_s\leq P_{s,T}|\nabla^Tf|_T,\ 0\leq s\leq T.
\end{equation}
Moreover, for $f\in \mathscr{B}_b(M)$ and $s\in [0,T]$,
\begin{equation}\label{Ricci-Hk}
(P_{s,T}f)^p(x)\leq P_{s,T}f^p(y)\exp{\l[\frac{p}{4(p-1)(T-s)}\rho_s^2(x,y)\r]}.
\end{equation}
\end{theorem}
\begin{remark}\label{convex-Ricci-flow-rem}
It is easy to see that these results above are similar to that for the Ricci flat manifold. Indeed, \eqref{Ricci-gd} and \eqref{Ricci-Hk} also can be derived when $\{g_t\}$ is a convex Ricci flow.
We would like to indicate that Pulemotov \cite{P10} gave the proof of the short time existence of the  convex Ricci flow.
\end{remark}

When $\lambda<0$, we need more curvature information around the boundary to deal with this case. Let ${\rm Sect}_t$ be the section curvature of $M$  and $\rho^{\partial}_t(x)$ be the distance between $x$ and $\partial M$ associated with the metric $g_t$. The required assumption is presented as follows.
\begin{description}
         \item[{\bf(H2) }] There exist positive constants $k,r_0,k_1$ such that $|{\rm Ric}_t|\leq k$  and  on the set $\partial^t_{r_0}M:=\{x\in M: \rho^{\partial}_t(x)\leq r_0\}$, $\rho^{\partial}_t$ is smooth and   ${\rm Sect}_t\leq k_1$.\vspace{-0.2cm}
       \end{description}
       If this assumption holds for $r_0<\frac{\pi}{2\sqrt{k_1}}$, then by constructing explicit $\phi_t$,  the constants in terms of $\phi$ in Theorem \ref{P1}  can be estimated. Thus, we have the gradient estimates for the solution to \eqref{heat-equ-boundary} by using  Theorem \ref{P1}\,(i) as follows.

\begin{theorem}\label{Ricci-gradient}
Suppose that $\{g_t\}_{t\in [0,T]}$ is a complete solution to \eqref{Ricci-flow} with $\lambda<0$.  Assume that the assumption
${\bf (H2)}$ holds for $0<r_0\leq \frac{\pi}{2\sqrt{k_1}}$. Then, for $f\in C^1(M)$ such that $f$ is constant outside a compact set, $P_{s,T}f$ is a solution to \eqref{heat-equ-boundary} and
\begin{align}
&|\nabla^sP_{s,T}f|_s\leq \l(1-\frac{\lambda r_0 d}{2}\r)\|\nabla^Tf\|_{\infty}\exp\l\{{(T-s)\l[-\frac{\lambda d}{r_0}+\l(4d-\frac{11}{2}\r)\lambda^2d^2-\lambda d r_0 k+2k\r]}\r\}.
\label{gradient-0-1}
\end{align}
\end{theorem}
\begin{proof}
From  the assumption {\bf (H2)}, we deduce that ${\rm Ric}_t\leq -k$ and $\partial_tg_t=2{\rm Ric}_t\leq 2k$, which leads to the following estimate
\begin{align*}
K_{\phi}(t)&\leq \inf\{\phi_t\Delta_t \phi_t\}^-+\inf\{\partial_t\log \phi_t\}^-+2k+(4d-6)\|\nabla^t\phi_t\|_{\infty}^2<\infty.
\end{align*}
We now turn  to  construct  an explicit  $\phi\in C^{1,2}([0,T]\times M)$.
Let
\begin{equation}\label{h}
  h(s)=\cos(\sqrt{k_1}\, s),\ \ \mbox{for all} \ \ s\geq 0.
\end{equation}
Then $0\leq h(s)\leq 1$ for  $s\in [0, \frac{\pi}{2\sqrt{k_1}}]$. Moreover, let
\begin{equation}\label{delta}
   \delta=\delta(r_0,\lambda,k_1)=\frac{-\lambda(1-h(r_0))^{d-1}}{\int_0^{r_0}(h(s)-h(r_0))^{d-1}\vd s}.
\end{equation}
Consider $$\phi_t:=\varphi\circ \rho_t^{\partial}, \quad \mbox{for all} \ \ t\in [0,T],$$ where
$$\varphi(r)=1+\delta\int_{0}^{r}(h(s)-h(r_0))^{1-d}\vd s\int_{s\wedge r_0}^{r_0}(h(u)-h(r_0))^{d-1}\vd u.$$
By an approximation argument, we may regard $\phi$ as $C^{\infty}([0,T]\times M)$-smooth.  Obviously, $\phi\geq 1$ and
$N_s\log \phi_s=-\lambda=-{\rm II}_s,\ \mbox{for all} \  s\in [0,T]$. So, $\phi \in \mathscr{D}$.

Next, we need to estimate $\inf\{\phi_t\Delta_t\phi_t\}^-$, $\inf\{\partial_t\log\phi_t\}^-$, $\|\nabla^t\phi_t\|_{\infty}^2$ and $\|\phi_t\|_{\infty}$ in terms of $\lambda, d, r_0$ and $k$.
As $h$ is decreasing on $[0, r_0]$, we conclude that
\begin{align*}
|\partial_t\log \phi_t|&=\l|\frac{\delta(h(\rho_t^{\partial}\wedge r_0)-h(r_0))^{1-d}\int_{\rho_t^{\partial}\wedge r_0}^{r_0}(h(u)-h(r_0))^{d-1}\vd u}{\phi_t}\partial_t \rho_t^{\partial}\r| \leq \frac{\delta r_0}{\phi_t}|\partial_t\rho_t^{\partial}|,\ \ \  \rho_t^{\partial}\leq r_0.
\end{align*}
Moreover, using the following formula,
$$\partial_t \rho_t^{\partial}=\frac{1}{2}\int_0^{\rho_t^{\partial}}\partial_tg_t(\dot{\gamma}(s),\dot{\gamma}(s))\vd s,\ \ \rho_t^{\partial}\leq r_0, $$
where $\gamma$ is the minimal geodesic from $x$ to $\partial M$, we obtain
 \begin{align}\label{phi-1}
 |\partial_t\log \phi_t|\leq \delta r_0^2k,\ \  \rho_t^{\partial}\leq r_0.\end{align}
Similarly, it holds
\begin{equation}\label{add-eq-phi}
|\nabla^t\phi_t|_t^2\leq \delta^2 r_0^2.
\end{equation}
In addition,
\begin{align*}
\int_{0}^{r_0}(h(s)-h(r_0))^{1-d}\vd s\int_{s}^{r_0}(h(u)-h(r_0))^{d-1}\vd u\leq \int_0^{r_0}(r_0-s)\vd s=\frac{r_0^2}{2},
\end{align*}
which implies
\begin{align}\label{eq6}
\|\phi_t\|_{\infty}=1+\delta \int_{0}^{r_0}(h(s)-h(r_0))^{1-d}\vd s\int_{s}^{r_0}(h(u)-h(r_0))^{d-1}\vd u\leq 1+\frac{\delta r^2_0}{2}.
\end{align}
Moreover,  since ${\rm II}_t=\lambda\leq 0$ and ${\rm Sect}_t\leq k_1$ on $\partial^t_{r_0}(M)$,  according to the Laplacian comparison theorem for $\rho^{\partial}_s$ (see \cite{Kasue1, Kasue2}),
we have $$\Delta_t\phi_{t}\geq \l(\frac{(d-1)\varphi'h'}{h}+\varphi''\r)(\rho_t^{\partial})\geq -\delta, \ 0<\rho_t^{\partial}\leq r_0 \l(\leq \frac{\sqrt{k_1}\pi}{2}\r),\  t\in [0,T].$$
Combining this with \eqref{eq6} implies
\begin{equation}\label{eq-add-1}
 \inf\{\phi_t\Delta_t\phi_t\}\geq -\|\phi_t\|_{\infty}\|\Delta_t\phi_t\|_{\infty}\geq -\l(1+\frac{\delta r_0^2}{2}\r)\delta^2r_0^2.
\end{equation}
 Concluding from  \eqref{phi-1}, \eqref{add-eq-phi}, \eqref{eq6} and \eqref{eq-add-1}, it suffices for us to estimate $\delta$. Since $-h'$ is increasing and $h$ is decreasing, by the FKG inequality, we have
$$\int_0^{r_0}(h(s)-h(r_0))^{d-1}\vd s\geq \frac{-r_0\int_0^{r_0}(h(s)-h(r_0))^{d-1}h'(s)\vd s}{-\int_0^{r_0}h'(s)\vd s}=\frac{r_0}{d}(1-h(r_0))^{d-1}.$$
By this and \eqref{delta}, we obtain $\delta\leq -\lambda d/r_0$.
Concluding all these estimates above, we have
\begin{align}
&K_{\phi,1}(t)^-\leq k-\frac{\lambda d}{r_0}+\l(d-\frac{3}{2}\r)\lambda^2d^2;\ \  K_{\phi,2}(t)\leq -2\lambda d k r_0+2k.\label{e-2}
\end{align}
Then,
\begin{align}
K_{\phi}(t)&\leq 2k-\frac{\lambda d}{r_0}+\l(4d-\frac{11}{2}\r)\lambda^2d^2-\lambda d r_0 k, 
\end{align}
which leads to complete the proof.
\end{proof}
In addition,
\begin{align}
K_{\phi}^{(p)}(r)&=\inf\{\phi_r^{-1}(\Delta_r\phi_r)+\partial_r \log \phi_r-(p+1)|\nabla^r\log \phi_r|_r^2\}\nonumber\\
&\geq  -\delta-\delta r_0k-\delta^2 r_0^2 (p+1)\nonumber\\
&\geq \frac{\lambda d}{r_0}+\lambda d r_0 k-\lambda^2r_0^2 (p+1).\label{esti-K-3}
\end{align}
Using this estimate and Corollary \ref{c2}, we have the following result directly.
\begin{corollary}\label{eq-2.3}
Under the some conditions of  Theorem \ref{Ricci-gradient},
we have that for $f\in C^1(M)$ such that $f$ is constant outside a compact set and $0\leq s<T$,
\begin{align*}
|\nabla^sP_{s,T}f|_s\leq \l(1-\frac{\lambda r_0 d}{2}\r)e^{(T-s)K_p}(P_{s,T}|\nabla^T f|_T^{p/(p-1)})^{(p-1)/p},
\end{align*}
where
\begin{align}\label{Kp}
K_p=-\frac{\lambda d}{r_0}-\lambda d r_0 k+\lambda^2r_0^2 (p+1).
\end{align}
Moreover, we have
\begin{align*}
|\nabla^sP_{s,T}f|_s^2\leq \l(1-\frac{\lambda r_0 d}{2}\r)^2\frac{K_2}{1-e^{2K_2(s-T)}} P_{s,T}f^2,\ \ s\in [0,T),\ f\in \mathscr{B}_b(M).
\end{align*}
\end{corollary}
Now, we  turn to consider  system  \eqref{Ricci-flow}-\eqref{heat-equ-boundary}. By Theorem \ref{P1}, we have
\begin{theorem}\label{Ricci-Harnack-inequality}
Under same condition of Theorem \ref{Ricci-gradient},
for $p>(\frac{2-2\lambda r_0 d}{2-\lambda r_0 d})^2$ and $0\leq s<T$, the Harnack inequality
\begin{align*}
(P_{s,T}f(y))^p\leq P_{s,T}f^p(x)\exp \l\{\frac{\sqrt{p}(\sqrt{p}-1)\tilde{K}\rho_s(x,y)}{8\tilde{\delta_p}[2(\sqrt{p}-1)/(2-\lambda r_0d)-\tilde{\delta_p}](1-e^{2\tilde{K}(s-T)})}\r\}
\end{align*}
holds for $\tilde{\delta_p}:=\max\l\{\frac{-\lambda r_0 d}{2-\lambda r_0 d}, \frac{\sqrt{p}-1}{2-\lambda r_0 d}\r\}$ and
$$\tilde{K}:=-\frac{\lambda d}{r_0}+\l(4d-\frac{9}{2}\r)\lambda^2d^2+2k-\lambda dkr_0.$$
\end{theorem}
\begin{proof}
It is easy to see from \eqref{eq6} that
\begin{align*}
\delta_T= \sup_{t\in [0,T]}(\sup\phi_t^{-1}-\inf \phi_t^{-1})\leq \frac{-\lambda r_0 d}{2-\lambda r_0 d};\ \
\lambda_T=\inf_{[0,T]\times M}\phi^{-1}\geq \frac{2}{2-\lambda r_0 d}.
\end{align*}
Combining this with the estimates obtained in the proof of Theorem \ref{P1}, we complete the proof.

\end{proof}

 \section{Equivalent functional  inequalities for curvature conditions }
\hspace{0.5cm} In this section, we  present the gradient estimates for the curvature conditions \eqref{CV1-1}, which is an extension of
\cite[Theorem 4.3]{cheng}  for the  time-inhomogeneous manifold without boundary. This part
is mainly based on \cite[Theorem 1.1]{WSe} for the case when the metric is independent of time.
\begin{theorem}\label{4t2}
Let   $p\in [1,\infty)$ and $\tilde{p}=p\wedge 2$. Then for
any $[s,t]\subset [0,T_c)$, $K\in C_b([s,t]\times M)$ and $\sigma\in C_b([s,t]\times
\partial M)$, the following statements are equivalent to each other.
\begin{enumerate}
  \item[$(i)$] $\mathcal{R}^Z_t\geq K_t$ and  $ {\rm II}_t\geq \sigma_t$ hold
for any $0\leq t<T_c$.
  \item[$(ii)$] $|\nabla^sP_{s, t}f (x)|_s^p\leq
\mathbb{E}\{|\nabla^tf|_t^p(X_t)\exp[-p\int_s^tK(r,X_{r})\vd
 r-p\int_s^t\sigma(r,X_r)\vd
l_r]|X_{s}=x\}$ holds for $x\in M$,  $0\leq s\leq t< T_c$, and $f\in
 C^1(M)$ such that $f$ is constant outside a compact set.
  \item [$(iii)$] For any $0\leq s\leq t< T_c$, $x\in M$ and positive $f\in C^1(M)$ such that $f$ is constant outside a compact set,
$$\frac{\tilde{p}[P_{s,t}f^2-(P_{s,t}f^{1/\tilde{p}})^{\tilde{p}}]}{4(\tilde{p}-1)}\leq \mathbb{E}\l\{|\nabla^tf|_t^2(X_t)\int_s^te^{-2\int_u^tK(r,X_{r})\vd r-2\int_u^t\sigma(r,X_r)\vd l_r}\vd u\bigg|X_s=x\r\},$$
where when $p=1$, the inequality is understood as its limit as
$p\downarrow 1$:
\begin{align*}
&P_{s,t}(f^2\log
f^2)(x)-(P_{s,t}f^2\log P_{s,t}f^2)(x) \\
\leq &\  4
\mathbb{E}\l\{|\nabla^tf|_t(X_t)\int_s^te^{-2\int_u^tK(r,X_{r})\vd r-2\int_u^t\sigma(r,X_r)\vd
l_r}\vd u\bigg|X_s=x\r\}.
\end{align*}
  \item[$(iv)$] For any $0\leq s< t< T_c$, $x\in M$ and positive function $f\in C^1(M)$ such that $f$ is constant outside a compact set,
 \begin{align*}&|\nabla^sP_{s,t}f|^2_s(x)\\
 \leq & \ \frac{[P_{s,t}f^{\tilde{p}}-(P_{s,t}f)^{\tilde{p}}](x)}{\tilde{p}(\tilde{p}-1)\int_s^t\l(\mathbb{E}\{(P_{u,t}f)^{2-\tilde{p}}(X_{u})e^{-2\int_s^uK(r, X_{r})\vd r-2\int_s^u\sigma(r,X_r)\vd l_r}|X_s=x\}\r)^{-1}\vd
 u},\end{align*}
where when $p=1$, the inequality is understood as its limit as
$p\downarrow 1$:
$$|\nabla^sP_{s,t}f|^2_s(x)\leq \frac{[P_{s,t}(f\log f)-(P_{s,t}f)\log P_{s,t}f](x)}{\dint_s^t\l(\mathbb{E}\l\{P_{u,t}f(X_{u})e^{-2\int_s^uK(r, X_{r})\vd r-2\int_s^u\sigma(r,X_r)\vd l_r}\big|X_s=x\r\}\r)^{-1}\vd u}.$$
\end{enumerate}
\end{theorem}
\begin{proof}
By the derivative formula established in Theorem \ref{Bis}, it is easy to derive (ii) from (i); then
according to Theorem \ref{4t1}, we see that (ii)--(iv) implies (i); and finally,
taking $f\in C^{\infty}(M)$ and $f$ is constant
outside a compact set, we derive (iii), (iv)
from (ii) by a  similarly discussion as in  the proof of \cite[Theorem 2.3.1]{Wbook2} for the case with constant metric. We just take the proof of ``(ii) $\Rightarrow$ (iii)" for example. A similar argument  leads to ``(ii)$\Rightarrow$ (iv)".

We again assume $s=0$.  As the boundedness of $|\nabla ^{\cdot}P_{\cdot,t}f|_{\cdot}$ on $[0, t]\times M$ is verified above,
by using the derivative formula in Theorem \ref{Bis},
\begin{align*}
&\frac{\vd}{\vd u}P_{0,u}(P_{u,t}f^{2/p})^p(x)\\
&=p(p-1)P_{0,u}\{(P_{u,t}f^{2/p})^{p-2}|\nabla^uP_{u,t}f^{2/p}|_u^2\}\\
&\leq \frac{4(p-1)}{p}\mathbb{E}^x\Big\{(P_{u,t}f^{2/p})^{p-2}(X_u)(P_{u,t}f^{\frac{2(2-p)}{p}})(X_u)\\
&\hspace{2cm} \times \mathbb{E}\l(|\nabla^tf|_t^2(X_t)e^{-2\int_u^tK(r,X_r)\vd r-2\int_u^t\sigma(r, X_r)\vd l_r}\big| \mathscr{F}_u\r)\Big\}
\end{align*}
holds for $x\in M$, $0\leq s\leq t<T_c$ and $f\in C^1(M)$ such that $f$ is constant outside a compact set.
Since $2-p\in [0,1]$, by the Jensen inequality and the Markov property, we arrive at
$$\frac{\vd }{\vd u}P_{0,u}(P_{u,t}f^{2/p})^p(x)\leq \frac{4(p-1)}{p}\mathbb{E}^x\l\{|\nabla^tf|_t^2(X_t)e^{-2\int_u^tK(r, X_r)\vd r-2\int_u^t\sigma(r, X_r)\vd l_r}\r\}.$$
Integrating with respect to $u$ over $[0,t]$ yields (iii) for $s=0$.
\end{proof}

Let $\varphi:\mathbb{R}^+\rightarrow \mathbb{R}^+$ be a non-decreasing function, we define
a cost function
$C_t(x,y)=\varphi(\rho_t(x,y)).$
To this cost function, we associate the Monge-Kantorovich minimization between
two probability measures on $M$,
\begin{align}\label{eq3}W_{C_t}(\mu,\nu)=\inf_{\eta\in \mathscr{C}(\mu,\nu)}\int_{M\times M}C_t(x,y)\vd \eta(x,y),\end{align}
where $\mathscr{C}(\mu,\nu)$ is the set of all probability measures on $M\times M$ with marginal $\mu, \nu\in \mathscr{P}(M)$ and $ \mathscr{P}(M)$ being the space of all probability measure on $M$. We denote
$$W_{p,t}(\mu,\nu)=(W_{\rho_t^p}(\mu, \nu))^{1/p}$$
the Wasserstein distance associated to $p>0$.

 We now list more equivalent statements for \eqref{CV1}, which is an extension
of \cite[Theorem 4.3]{cheng} to manifolds with boundary carrying convex flows.
 See also \cite{W09a, W10, WPC} for the corresponding conclusions on  the constant manifold with boundary.
\begin{theorem}\label{4t3}
Let $p\in [1,\infty)$, $K\in C([0,T_c))$ and  $\{p_{s,t}\}_{0\leq s\leq t<T_c}$ be
the heat kernel of $\{P_{s,t}\}_{0\leq s\leq t< T_c}$ associated with the volume measure $\mu_t$
  with respect to the metric $g_t$. Then the
following assertions are equivalent to each other.
\begin{enumerate}
  \item [$(i)$] \eqref{CV1} holds.
   \item [$(ii)$] For any $x,y\in M$ and $0\leq s< t< T_c$, $$W_{p,t}(\delta_xP_{s,t},\delta_yP_{s,t})\leq \rho_s(x,y)e^{-\int_s^tK(r)\vd
   r}.$$
  \item [$(ii')$] For any $\mu_1, \mu_2\in \mathscr{P}(M)$ and $0\leq s< t< T_c$,
  $$W_{p,t}(\nu_1P_{s,t},\nu_2P_{s,t})\leq W_{p,s}(\nu_1,\nu_2)e^{-\int_s^tK(r)\vd r}.$$
  \item [$(iii)$] When $p>1$, for any $f\in \mathscr{B}_b^+(M)$ and $0\leq s<t<T_c$,
  $$(P_{s,t}f)^p(x)\leq P_{s,t}f^p(y)\exp{\l[\frac{p}{4(p-1)}{\l(\int_s^te^{2\int_s^rK(u)\vd u}\vd
r\r)}^{-1}\rho_s^2(x,y)\r]}.$$
  \item [$(iv)$]For any $f\in \mathscr{B}_b^+(M)$ with $f\geq 1$ and $0\leq s\leq t< T_c$,
  $$P_{s,t}\log f(x)\leq \log P_{s,t}f(y)+{\l(4\int_s^te^{2\int_s^rK(u)\vd u}\vd
r\r)}^{-1}\rho_s^2(x,y).$$
  \item [$(v)$] When $p>1$, for any $0\leq s\leq t<T_c$ and $x,y \in M$,
 \begin{align*}
 &\int_Mp_{s,t}(x,y)\l(\frac{p_{s,t}(x,y)}{p_{s,t}(y,z)}\r)^{\frac{1}{p-1}}\mu_t(\vd z)
 \leq \exp{\l[\frac{p}{4(p-1)^2}{\l(\int_s^te^{2\int_s^rK(u)\vd u}\vd
r\r)}^{-1}\rho_s^2(x,y)\r]}.\end{align*}
 \item [$(vi)$] For any $0\leq s<t< T_c$ and $x,y\in M$,
 $$\int_M p_{s,t}(x,y)\log
 \frac{p_{s,t}(x,y)}{p_{s,t}(y,z)}\mu_{t}(\vd z)\leq
 \rho_s^2(x,y){\l(4\int_s^te^{2\int_s^rK(u)\vd u}\vd
r\r)}^{-1}.$$
\item [$(vii)$] For any $0\leq s<u\leq t<T_c$ and $1<q_1\leq q_2$ such
that
\begin{align}\label{3e11}
\frac{q_2-1}{q_1-1}=\frac{\int_s^te^{2\int_s^{\tau}K(r)\vd r}\vd
\tau}{\int_s^ue^{2\int_s^{\tau}K(r)\vd r}\vd \tau},
\end{align}
it holds
$$\{P_{s,u}(P_{u,t}f)^{q_2}\}^{\frac{1}{q_2}}\leq (P_{s,t}f^{q_1})^{\frac{1}{q_1}}, \ f\in\mathscr{B}^+_b(M).$$
\item [$(viii)$]  For any $0\leq s\leq u\leq t<T_c$ and $0<q_2\leq q_1$ or $q_2\leq
q_1<0$ such that $(\ref{3e11})$ holds,
$$(P_{s,t}f^{q_1})^{\frac{1}{q_1}}\leq \{P_{s,u}(P_{u,t}f)^{q_2}\}^{\frac{1}{q_2}},\ \ f\in \mathscr{B}^+_b(M).$$
\item [$(ix)$] For any $0\leq s<u\leq t<T_c$ and $f\in C^1(M)$ such that $f$ is constant outside a compact set,
$$|\nabla^sP_{s,t}f|^p_s\leq e^{-p\int_s^tK(r)\vd r}P_{s,t}|\nabla^tf|^p_t.$$
\item [$(x)$]
For any $0\leq s<u\leq t<T_c$ and positive function $f\in C^1(M)$ such that $f$ is constant outside a compact set,
$$\frac{(p\wedge 2)[P_{s,t}f^2-(P_{s,t}f^{2/(p\wedge 2)})^{p\wedge 2}]}{4(p\wedge 2-1)}\leq \int_s^te^{-2\int_u^tK(r)\vd r}\vd u\cdot P_{s,t}|\nabla^tf|^2_t.$$
When $p=1$, the inequality reduces to the log-Sobolev inequality
$$P_{s,t}(f^2\log f^2)-(P_{s,t}f^2)\log P_{s,t}f^2\leq 4\int_s^te^{-2\int_u^tK(r)\vd r}\vd u \cdot P_{s,t}|\nabla^tf|^2_t.$$
\end{enumerate}
\end{theorem}
\begin{proof}
First, by Theorems \ref{2t1}, \ref{coupling} and \ref{4t2}, the inequalities  (ii)--(x) can be derived from (i) by a similar discussion as in \cite[Theorem 4.3]{cheng} for the case without boundary.

Then, we assume (iv) and prove (i).  For a fixed point
$x\in M^{\circ}$, $t\in [0,T_c)$ and $X\in T_xM$, take $f\in C^{\infty}_0(M)$
such that $\nabla^tf=X$, ${\rm Hess}^t_{f}(x)=0$ and
$f=0$ in a neighborhood of $\partial M$. The argument in
\cite[Theorem 4.4]{cheng} works well for this case, i.e. $\mathcal{R}^Z_t\geq K(t)$ can be induced from
(iv).

So, it only leaves for us to derive ${\rm II}_t\geq 0$.
By Theorem \ref{II-form},  it is obvious to see that  we only need to consider the term with order  $\sqrt{t}$. So we do not need to care about the
terms, which  come from the time derivative about the metric, since they  at least have
order $t$. Therefore, by a similar procedure as in time-homogeneous
case (see \cite{WSe}). We conclude that  $\partial M$
is convex under the metric $g_t$ for all $t\in [0,T_c)$.
\end{proof}

\section{Appendix}
\begin{proof}[Proof of Theorem \ref{c2}]
  Without loss of generality, we also consider $s=0$ for simplicity.

  (a)
  By the It\^{o} formula, we have
 \begin{align*}
 \vd \phi_t^{-p}(X_t)&=\l<\nabla^t\phi_t^{-p}(X_t), u_t\vd B_t\r>_t+(L_t\phi_t^{-p}(X_t)+\partial_t\phi_t^{-p}(X_t))\vd t+N_t\phi_t^{-p}(X_t)\vd l_t\\
 &\leq \l<\nabla^t\phi_t^{-p}(X_t),u_t\vd B_t\r>_t-p\phi_t^{-p}(X_t)\{K_{\phi}^{(p)}(t)\vd t+N_t\log\phi_t(X_t)\vd l_t\}.
 \end{align*}
 So, $M_t:=\phi_t^{-p}(X_t)\exp{\l[p\int_0^tK_{\phi}^{(p)}(s)\vd s+p\int_0^tN_s\log \phi_s(X_s)\vd l_s\r]}$
 is a local martingale. Thus, using the Fatou lemma and noting that $\phi_t\geq 1$, we have
 \begin{align*}
& \mathbb{E}\l\{\phi_t^{-p}(X_t)\exp{\l[p\int_0^tK_{\phi}^{(p)}(s)\vd s+p\int_0^tN_s\log\phi_s(X_s)\vd l_s\r]}\r\}\\
 &\leq \liminf_{n\rightarrow\infty}\mathbb{E}^x\l\{\phi_t^{-p}(X_{t\wedge \zeta_n})\exp{\l[p\int_0^{t\wedge \zeta_n}K_{\phi}^{(p)}(s)\vd s+p\int_0^{t\wedge \zeta_n}N_s\log\phi_s(X_s)\vd l_s\r]}\r\}\\
 &\leq \phi_0^{-p}(x)\leq 1.
 \end{align*}
 Therefore,
 \begin{align}\label{eq0}\mathbb{E}^x\exp{\l[p\int_0^tN_s\log \phi_s(X_s)\vd l_s\r]}\leq \|\phi_t\|_{\infty}^{p}e^{-p\int_0^tK_{\phi}^{p}(s)\vd s},\ \ t\geq 0.\end{align}
 Since ${\rm II}_t\geq -N_t\log\phi_t$, by combining this with Theorem \ref{Bis} for $\sigma(t,\cdot)=-N_t\log\phi_t$ and Theorem \ref{P1} (i), we obtain
 \begin{align*}
 |\nabla^0P_{0,t}f|^p_0(x)\leq &(P_{0,t}|\nabla^tf|_t^{p/(p-1)})^{(p-1)}(x)\mathbb{E}^x\|Q_t\|^p\\
\leq & (P_{0,t}|\nabla^tf|^{p/(p-1)}_t(x))^{(p-1)}\mathbb{E}^x\exp{\bigg[p\int_0^tN_s\log\phi_s(X_s)\vd s\bigg]}\\
 \leq &
 \|\phi_t\|^p_{\infty}(P_{0,t}|\nabla^tf|_t^{p/(p-1)})^{(p-1)}(x)e^{-p\int_0^tK_{\phi}^p(s)\vd s}.
 \end{align*}
 Therefore, the first inequality holds.

 (b)  Let
 $$h(s)=\frac{\int_0^s\|\phi_u\|_{\infty}^{-2}e^{2\int_0^uK_{\phi}^{(2)}(r)\vd r}\vd u}{\int_0^t\|\phi_u\|_{\infty}^{-2}e^{2\int_0^uK_{\phi}^{(2)}(r)\vd r}\vd u},\ \quad s\in [0,t].$$
 Then the following inequality follows from the second formula in (\ref{2Bis}) and (\ref{eq0}) for $p=2$,
 \begin{align*}
 |\nabla^0P_{0,t}f|_0^2&\leq \frac{P_{0,t}f^2}{2}\mathbb{E}\int_0^th'(s)^2\|Q_s\|^2\vd s\\
 &\leq \frac{P_{0,t}f^2}{2}\mathbb{E}\int_0^th'(s)^2\|\phi_s\|_{\infty}^2\exp{\l[-2\int_0^sK_{\phi}^{(2)}(r)\vd r\r]}\vd s\\
 &\leq \frac{1}{2}\l[\int_0^t\|\phi_u\|_{\infty}^{-2}e^{2\int_0^uK_{\phi}^{(2)}(r)\vd r}\vd u\r]^{-1}P_{0,t}f^2.
 \end{align*}
 We complete the proof of \eqref{eq1}.
\end{proof}

\vspace{1cm}
\noindent\textbf{Acknowledgements}  \  The first author was supported in part by the starting-up research fund supplied by Zhejiang University of Technology (Grant No.109007329) and the Natural Science Foundation of Zhejiang University of Technology (Grant No. 2014XZ011).


\begin{thebibliography}{99}\setlength{\itemsep}{-1pt}
\bibitem{ACT} Arnaudon, M., Coulibaly, K., Thalmaier, A.:  Brownian motion with respect to a metric depending on time: definition, existence and applications to Ricci flow, C. R. Math. Acad. Sci. Paris {346},   773--778 (2008)

\bibitem{ACT1} Arnaudon, M., Coulibaly, K.,  Thalmaier, A.:  Horizontal diffusion in $C^1$ path space, S\'{e}minaire de Probabilit\'{e}s XLIII, 73--94, Lecture Notes in Math. (2006), Springer, Berlin (2011)


\bibitem{BCP}Bailesteanu, M., Cao, X., Pulemotov, A.: Gradient estimates for the heat equation under the Ricci flow, J. Funct. Anal. {258},   3517--3542 (2010)


\bibitem{Bakry} Bakry, D.: On Sobolev and logarithmic Sobolev inequalities for Markov semigroups, New trends in stochastic analysis (Charingworth, 1994), 43--75, World Sci. Publ., River Edge, NJ (1997)

\bibitem{BE}
Bakry, D., \'{E}mery, M.:  Hypercontractivit\'{e}de semi-groupes de diffusion, (French) [Hypercontractivity for diffusion semigroups] C. R. Acad. Sci. Paris S\'{e}r. I Math. {\bf 299},  775--778 (1984)

\bibitem{Bess} Besse, A. L.: Einstein manifolds, Ergebnisse der Mathematik und ihrer Grenzgebiete (3) [Results in Mathematics and Related Areas (3)], 10. Springer-Verlag, Berlin (1987)

\bibitem{Bismut} Bismut, J. -M.: Large deviations and the Malliavin calculus, Progress in Mathematics, 45. Birkh\"{a}user Boston, Inc., Boston, MA (1984)

\bibitem{Cr} Cranston, M.:
Gradient estimates on manifolds using coupling,
J. Funct. Anal. {99},   110--124 (1991)


\bibitem{CW94} {M.-F. Chen, F.-Y. Wang}, {Application of coupling method to the frist eigenvalue on manifold}, Sci. Sin. (A) 37:1 (1994), 1--14.

\bibitem{CW97a} {M.-F. Chen, F.-Y. Wang},  {General formula for lower bound of the first eigenvalue on Riemannian manifolds},  Sci. Sin. (A) 40:4 (1997), 384-394.

\bibitem{CW97b} {M.-F. Chen, F.-Y. Wang}, {Estimation of spectral gap for elliptic operators}, Transactions of the American Mathematical Society,  349:3 (1997), 1239-1267.

\bibitem{cheng} Cheng, L.-J.: Diffusion process on time-inhomogeneous manifolds, arXiv:math/1211.3621, preprint (2012)

\bibitem{Emery}\'{E}mery, M.: Stochastic Calculus in Manifolds, With an appendix by P.-A. Meyer. Universitext. Springer-Verlag, Berlin (1989)

\bibitem{EL} Elworthy, K. D., Li, X.-M.: Formulae for the derivatives of heat semigroups, J. Funct. Anal. {125},  252--286 (1994)

\bibitem{FWW} Fang, S., Wang, F.-Y., Wu, B.: Transportation-cost inequality on path spaces with uniform distance, Stochastic Process. Appl. {118},   2181--2197 (2008)

\bibitem{Fr} Friedman, A.:  Partial Differential Equations of Parabolic Type, Prentice-Hall, Inc., Englewood Cliffs, N.J. 1964 xiv+347 pp.

\bibitem{Gu} C. M. Guenther, The fundamental solution on manifolds
with time-dependent metrics,  J. Geo.  Anal.
12, 425-436 (2002).

\bibitem{Hsu} Hsu, E. P.:  Stochastic Analysis on Manifolds, Graduate Studies in Mathematics, 38. American Mathematical Society, Providence, RI (2002)

\bibitem{Hsu02}Hsu, E. P.:  Multiplicative functional for the heat equation on manifolds with boundary, Michigan Math. J. {50},  351--367 (2002)



\bibitem{Kasue1} Kasue, A.: A Laplacian comparison theorem and function theoretic properties of a complete Riemannian manifold. Japan. J. Math. (N.S.) {8},  309--341 (1982)

\bibitem{Kasue2} Kasue, A.: On Laplacian and Hessian comparison theorems, Proc. Japan Acad. Ser. A Math. Sci. {58}, 25--28 (1982)

\bibitem{Ku} Kuwada, K., Philipowski, R.: Non-explosion of diffusion processes on manifolds with time-dependent metric,  Math. Z. {268},  979--991 (2011)


\bibitem{Ku3}{Kuwada, K.}: Convergence of time-inhomogeneous geodesic random walks and its application to coupling methods, Ann. Probab. 40,  1945--1979 (2012)

\bibitem{KrR} Krylov, N. V., Rozovsk\v{i}, B. L.: Stochastic evolution equations, (Russian) Current problems in mathematics, Vol. {14}, 71--147, 256,  Akad. Nauk SSSR, Vsesoyuz. Inst. Nauchn. i Tekhn. Informatsii, Moscow (1979)

\bibitem{P10} Pulemotov, A.: Quasilinear parabolic equations and the Ricci flow on manifolds with boundary, arXiv: math/1012.2941, preprint (2010)

 \bibitem{Qian} Qian, Z.: A gradient estimate on a manifold with convex boundary, Proc. Roy. Soc. Edinburgh Sect. A {127},   171--179 (1997)

\bibitem{Shen}{Shen, Y.}: On Ricci deformation of a Riemannian metric on manifold with boundary, Pacific J. Math. {173},  203--221 (1996)


\bibitem{TW} Thalmaier, A., Wang, F.-Y.:  Gradient estimates for harmonic functions on regular domains in Riemannian manifolds,  J. Funct. Anal. {155},   109--124 (1998)


\bibitem{W04} Wang, F.-Y.: Equivalence of dimension-free Harnack inequality and curvature condition, Integral Equations Operator Theory {48},  547--552 (2004)


 \bibitem{W07} Wang, F.-Y.:  Estimates of the first Neumann eigenvalue and the log-Sobolev constant on non-convex manifolds,  Math. Nachr. {280},   1431--1439 (2007)

\bibitem{W09a} Wang, F.-Y.: Second fundamental form and gradient of Neumann semigroups, J. Funct. Anal. {256},  3461--3469 (2009)

\bibitem{WSe}  Wang, F.-Y.: Semigroup properties for the second fundamental form, Doc. Math. {15}, 527--543 (2010)

\bibitem{W10} Wang, F.-Y.: Harnack inequalities on manifolds with boundary and applications, J. Math. Pures Appl. (9) {94},  304--321 (2010)

\bibitem{WPC}  Wang, F.-Y.: Gradient and Harnack inequalities on noncompact manifolds with boundary, Pacific J. Math. {245},   185--200 (2010)

\bibitem{WAP} Wang, F.-Y.: Harnack inequality for SDE with multiplicative noise and extension to Neumann semigroup on nonconvex manifolds, Ann. Probab. {39},  1449--1467 (2011)

\bibitem{Wbook2} Wang, F.-Y.: Analysis for (Reflecting) Diffusion Processes on Riemannian Manifolds,  World Scientific, Singapore (2013)

\end{thebibliography}
\end{document}